\newtheorem{theorem}{Theorem}[section]
\newtheorem{lemma}[theorem]{Lemma}
\newtheorem{proposition}[theorem]{Proposition}
\newtheorem{corollary}[theorem]{Corollary}
\newtheorem{claim}[theorem]{Claim}
\long\def\alert#1{\smallskip{\hskip\parindent\vrule%
\vbox{\advance\hsize-2\parindent\hrule\smallskip\parindent.4\parindent%
\narrower\noindent#1\smallskip\hrule}\vrule\hfill}\smallskip}
\theoremstyle{definition}
\newtheorem{definition}[theorem]{Definition}
\newtheorem{definitions}[theorem]{Definitions}
\newtheorem{example}[theorem]{Example}
\newtheorem{remark}[theorem]{Remark}
\newtheorem{question}[theorem]{Question}
\newtheorem{questions}[theorem]{Questions}
\newtheorem{setting}[theorem]{Setting}
\newtheorem{settingnotation}[theorem]{Setting and Notation}
\numberwithin{equation}{section}
\newtheorem{remark/questions}[theorem]{Remark and Questions}
\newtheorem{remarks}[theorem]{Remarks}
\newtheorem{notationremarks}[theorem]{Notation and Remarks}
 \long\def\alert#1{\smallskip\line{\hskip\parindent\vrule
\vbox{\advance\hsize-2\parindent\hrule\smallskip\parindent.4\parindent
  \narrower\noindent#1\smallskip\hrule}\vrule\hfill}\smallskip}
\def\subheading#1{\noindent{\bf{#1}}\vskip12pt}
\def\frac#1#2{{{#1}\over{#2}}}
\def\endresult#1{\medskip}
\def \Q{{{\mathbb Q }}}
  \def \N{{\mathbb  N}}
\def\q{{\bf q}}
\def\p{{\bf p}}
\def\m{{\bf m}}
\def\n{{\bf n}}
  \def \w{{ \mbox{\bf w}}}
\def\dim{\mathop{\rm dim}}
\def\Spec{\mathop{\rm Spec}}
\def\hgt{\mathop{\rm ht}}
\begin{document}
\baselineskip 15 pt

\title[Excellent normal domains  and Krull domains]{
 Excellent normal local domains and \\ extensions of Krull domains}
\author{William Heinzer}
\address{Department of Mathematics, Purdue University, West
Lafayette, Indiana 47907}

\email{heinzer@math.purdue.edu}

\author{Christel Rotthaus}
\address{Department of Mathematics, Michigan State University, East
  Lansing,
   MI 48824-1027}

\email{rotthaus@math.msu.edu}

\author{Sylvia Wiegand}
\address{Department of Mathematics, University of Nebraska, Lincoln,
NE 68588-0130}

\email{swiegand@math.unl.edu}

\subjclass{ Primary 13B35, 13J10, 13A15}

\dedicatory{This paper is dedicated to Hans-Bj\o rn Foxby.}

\keywords{power series, Noetherian and non-Noetherian integral domains.}

   \thanks{The authors are grateful for
  the hospitality, cooperation and support of Michigan State,
  Nebraska,
   Purdue and the CIRM at Luminy, France, where several work sessions on related research were
   conducted.  We thank the referee for a careful reading of the paper and helpful comments.}

   \begin{abstract}
We consider properties of extensions of  Krull domains  such as flatness
that  involve behavior of extensions and
contractions of prime ideals. Let $(R,\m)$ be an excellent normal local domain
with field of fractions $K$, let $y$ be a nonzero element of $\m$ and let
$R^*$ denote the $(y)$-adic completion of $R$. For elements  $\tau_1, \ldots, \tau_s$
 of $yR^*$ that are algebraically independent over $R$, we construct
 two associated
Krull domains:  an intersection domain
$A := K(\tau_1, \ldots \tau_s) \cap R^*$ and its  approximation
domain $B$; see Setting~\ref{7.4.1}.

If in addition $R$ is countable with $\dim R \ge 2$,
we prove that there exist elements $\tau_1, \ldots, \tau_s, \ldots $ as
above such that, for each $s \in \N$, the extension
$R[\tau_1, \ldots, \tau_s] \hookrightarrow R^*[1/y]$ is flat; equivalently,
$B = A$ and $A$ is Noetherian.  Using this result we establish the existence of a
normal Noetherian local domain $B$ such that:  $B$ dominates $R$; $B$ has $(y)$-adic
completion $R^*$; and $B$ contains a height-one prime ideal $\p$ such that
$R^*/\p R^*$ is not reduced. Thus $B$ is not a Nagata domain and hence is not excellent.

We present several theorems involving the construction. These theorems yield examples
where  $B \subsetneq A$ and $A$ is Noetherian while $B$ is not Noetherian; and
other examples where $B = A$ and $A$ is not Noetherian.

\end{abstract}

\maketitle

\section{Introduction}\label{intro}

About twenty years ago Judy Sally gave an expository talk on the following question:

\begin{question} \label{SallyQ} What rings lie between a Noetherian
integral domain $S$ and its field of fractions $\mathcal Q(S)$?
\end{question}

We are inspired by work of Shreeram Abhyankar such as that in his paper \cite{Abhy0}
to
ask the following related question: \footnote{Ram's work demonstrates the vastness of
power series rings.  The authors have fond memories of many pleasant conversations
with him concerning power series.}

\begin{question} \label{RamQ}
Let $I$ be an ideal of a Noetherian integral domain $R$
and let $R^*$ denote the $I$-adic completion of $R$.
What rings lie between  $R$ and $R^*$?
\end{question}

A  wide variety of integral domains fit the descriptions of both
Questions~\ref{SallyQ} and \ref{RamQ}.
Let $(R,\m)$ be an excellent normal local domain and let $S$ be a
polynomial ring in finitely many variables over $R$.
In work over a number of years related to these questions, the authors
have been developing  techniques for constructing examples that are
birational extensions  of $S$ and also subrings of an ideal-adic
completion of $R$.  Classical constructions of Noetherian integral domains with interesting
properties,  such as failure to be a Nagata ring, have been given by  Akizuki, Schmidt,
 Nagata and  others, \cite{A}, \cite{Sc},  \cite{N1}.  We recall that a ring $A$ is a
{\it Nagata ring } if $A$ is Noetherian and if  the integral closure
of $A/P$ in $L$ is finite over $A/P$, for every  prime ideal $P$ of
$A$ and every field $L$ finite algebraic over the field of fractions
of $A/P$, \cite[page 264]{M}.

We often use in our construction the completion of an excellent normal local
domain $(R,\m)$ with respect to a principal ideal $yR$,
where $y$ is a nonzero nonunit  of $R$.  The $(y)$-adic
completion $R^*$ of $R$ may be regarded as either an inverse limit or
as a homomorphic image of a formal power series ring $R[[z]]$ over $R$.
Thus we have
$$
R^* ~=~
\underset{n}{\varprojlim}\phantom{x} \Big(\frac{R}{y^nR}\Big)  ~ = ~ \frac{R[[z]]}{(z-y)R[[z]]}.
$$
An element  $\tau$  of $R^*$ has an expression as a power series in $y$
with coefficients in $R$
It is often the case that there exist elements $\tau_1, \ldots, \tau_n$ in $R^*$
that are algebraically independent over $R$.
An elementary cardinality argument shows this  is always the case if $R$ is countable.
Assume that $\tau_1, \ldots, \tau_n$
in $R^*$ are algebraically independent over $R$. By modifying $\tau_i$ by
an element in $R$, we may assume  that each $\tau_i \in yR^*$.
Let $S := R[\tau_1, \ldots, \tau_n]$.  Then $S$ is both a subring of $R^*$ and
a polynomial ring in $n$ variables over $R$.
Although the expression for the $\tau_i$ as power series in $y$ with coefficients
in $R$ is not unique,  we use it to construct
an integral domain $B$ that is a directed union of localized polynomial rings over $R$.

The construction we consider associates with  $R$ and  $\tau_1, \ldots, \tau_n$
the following two integral domains:
\begin{enumerate}
\item   an intersection domain $A := \mathcal Q(S) \cap R^*$,  and
\item   an integral domain $B \subseteq A$ that approximates $A$.
\end{enumerate}
 The integral domain $B$ is a directed union of localized polynomial rings in $n$
variables over $R$.  The rings $B$ and $A$ are birational extensions of
the polynomial ring $S$ and subrings of $R^*$.
Thus they fit  the descriptions of  both
Questions~\ref{SallyQ} and \ref{RamQ}.
For   integral domains $A$ and $B$ obtained as above,   we ask:

\begin{questions} \label{PropQ}  For a given excellent normal local
domain $(R,\m)$ and elements $\tau_1, \ldots, \tau_n$
as above,   what properties do the constructed rings  $A$ and $B$ have,
 and what criteria determine these properties?
\end{questions}

Our work in this article concerning Questions~\ref{PropQ}  focuses
primarily on the case where the base ring $R$ is an excellent normal local domain.
The intersection  domain $A  = \mathcal Q(S) \cap R^*$ may fail to be Noetherian
even though  $R$,  and therefore $R^*$,   is an excellent normal local domain.
However,  the intersection domain
  $A$ is always a Krull domain,  and the
$(y)$-adic completion of $A$ is $R^*$.
Thus, in order to present an iterative procedure,  in Section~\ref{basicsapprox}
we present many of the properties we study with the following Krull domain setting:

\begin{setting} \label{Krullset} Let $(T,\n)$  be a  local Krull domain
with field of fractions $\mathcal Q(T)$.
Assume that  $y\in \n$ is a nonzero element
such that the $(y)$-adic completion $ (T^*,\n^*)$
of $T$  is an  analytically normal Noetherian local domain.
Since the $\n$-adic
completion of $T$ is the same as the $\n^*$-adic completion
of $T^*$, it  follows that the $\n$-adic completion $\widehat
T$ of $T$ is also a normal Noetherian local domain.
Let $\mathcal Q(T^*)$ denote the field of fractions  of $T^*$.
Since $T^*$ is Noetherian,  $\widehat T$ is faithfully flat over $T^*$ and
we have
$T^* = \widehat T \cap \mathcal Q(T^*)$. Therefore
$\mathcal Q(T)\cap T^*=\mathcal Q(T)\cap\widehat T$.
Assume that $T = \mathcal Q(T) \cap T^*$,  and let
$d$ denote the dimension of
the Noetherian domain $T^*$. It follows that $d$ is also
the dimension of $\widehat T$.

Let $\tau_1\dots,\tau_s$ be elements of $yT^*$ that are algebraically
independent over  $T$. We consider the extensions
$$T  ~\hookrightarrow ~ T[\tau_1\dots,\tau_s]~ \hookrightarrow ~
A~ :=\mathcal Q(T)(\tau_1, \ldots, \tau_s)~\cap ~ T^*~\hookrightarrow ~
T^*.$$
In particular, the following map is critical:

\begin{equation*} \tag{\ref{Krullset}.0}
\varphi: ~~T[\tau_1\dots,\tau_s]~\hookrightarrow ~
T^*[1/y].
\end{equation*}
\end{setting}

The intersection ring $A = \mathcal Q(T)(\tau_1, \ldots, \tau_s)
\cap T^*$ and its approximating ring $B$ may be Noetherian or not,
or excellent or not. Examples~\ref{4.7.13} and \ref{wfexample}
demonstrate that one may have $B = A$, or $B \subsetneq A$.
Properties of the rings $B$ and $A$ are related to properties  of
the map $\varphi$ of Equation~\ref{Krullset}.0; this is illustrated
by two conclusions of Theorem~\ref{7.5.45}:
\begin{enumerate}
\item  $B$ is Noetherian
if and only if $\varphi$ is flat.
\item If $B$ is Noetherian, then  $B = A$.
\end{enumerate}
In general, by Theorem~\ref{7.5.5}, we have  $B = A$  if and only if
$\varphi$ satisfies the  {\it weak flatness } property  in
Definitions~\ref{wfetc} below.
We  describe   in Section~\ref{basicsapprox} the construction of $B$.

The rings $T[\tau_1, \ldots, \tau_s]$ and $T^*[1/y]$ in
Equation~\ref{Krullset}.0 are Krull domains as are also the
constructed rings $B \hookrightarrow A$. We demonstrate connections between properties
of the extension of Krull domains
defined by the map $\varphi$ in Equation~\ref{Krullset}.0, and properties of the Krull domains
$B$ and $A$.
 Some of our results hold for Krull domains as in Setting~\ref{Krullset};
for others we restrict to
the case where $T = R$ is an  excellent normal local domain.

It is useful to give a name for the  elements $\tau_1,\cdots, \tau_s$ in case
  the map   $\varphi$ of Equation~\ref{Krullset}.0 is flat.

\begin{definition}  \label{prlidef}  Assume Setting~\ref{Krullset}; thus  $T$ is a  local Krull
domain and   $y$ is a nonzero nonunit element of
 $T$  such that  the $(y)$-adic completion $T^*$  of $T$ is an analytically normal
Noetherian local domain with $T = \mathcal Q(T) \cap T^*$.
Elements $\tau_1\dots,\tau_s \in yT^*$ that are algebraically independent over  $T$
are said to be {\it
primarily limit-intersecting} in $y$ over $T$ provided
the inclusion map
$$
\varphi : ~ T[\tau_1,\dots,\tau_s] ~ \hookrightarrow ~  T^*[1/y]
$$
is flat.   An infinite set $\{\tau_i\}_{i=1}^\infty$ of elements in $yT^*$ that are algebraically independent over $T$ is
said to be {\it primarily limit-intersecting} in $y$ over $T$ if for each positive integer $s$, the  elements
$\tau_1, \ldots, \tau_s$ are  primarily limit-intersecting in $y$ over $T$.
\end{definition}

It is natural to ask about the existence of primarily limit-intersecting elements:

\begin{question} \label{ques1}
Let $R$ be an excellent normal local domain  with
$\dim R = d \ge 2$, let $y$ be a nonzero element in
the maximal ideal $\m$ of $R$, and let $R^*$ be the $(y)$-adic completion of $R$.
Under what conditions on $R$ do there exist elements
that are primarily limit-intersecting in $y$ over $R$?
\end{question}

With notation as in Question~\ref{ques1},
we describe  in Theorem~\ref{iff} and Remark~\ref{iffremark}
necessary and sufficient conditions that  an element
$\tau  \in yR^*$ be primarily limit-intersecting in $y$ over $R$. If $R$ is countable,
we prove in Theorem~\ref{exnprimli} the existence of an infinite sequence
$\tau_1, \ldots, \tau_s, \ldots \in yR^*$ of elements that are primarily
limit-intersecting in $y$ over $R$. We show in Theorem~\ref{nagtheorem}  that in general
for an element $\eta \in yR^*$ that is
primarily limit-intersecting in $y$ over $R$, the constructed Noetherian
domain
$$
B ~= ~ A ~ =  ~ R^* \cap \mathcal Q(R[\eta])
$$ may fail to be excellent.

In Section~\ref{section4} we  present two theorems involving the construction.
These theorems yield examples
where  $B \subsetneq A$ and $A$ is Noetherian while $B$ is not Noetherian; and
other examples where $B = A$ and $A$ is not Noetherian.  We
describe several examples obtained by iteration of
the construction considered in Section~\ref{section3}.

\section{Basic properties and the approximation domain\label{basicsapprox}}

In this section we give  background and  terminology.
We generally assume Setting~\ref{Krullset} in this section.
First we illustrate the construction of
an intersection domain $A$ and develop the terminology necessary
for an approximation domain $B$ in
 what we consider the easiest
example that can be constructed.

\begin{example}\label{easy}  {\bf The ``easiest" example. } Let $k$ be a field,
for example, $k=\Q$, the rational numbers, and let
$x$ be an indeterminate over $k$. Let $R=k[x]_{(x)}$ and  $R^*=k[[x]]$, the power
series ring in $x$ over $k$ and the $x$-adic completion of $R$.
Let $\tau\in xk[[x]]$ be algebraically independent over $R$; for example,
if $k=\Q$, we could take $\tau=e^x-1$.
Define $A$, the {\it intersection domain} associated to $\tau$ over $R$ by
$$
A  ~=   ~k(x,\tau)\cap k[[x]].
$$

In this case $A$ is a rank-one  discrete valuation ring (DVR) because it is the intersection
of the DVR $k[[x]]$ with a subfield of $k((x))$ that is not contained in $k$.
Thus $A$ is a Noetherian one-dimensional regular local ring (RLR) and the
unique maximal ideal is $xA$.

We apply  approximation techniques to more precisely  describe  the elements
that are in  $A$.  In order to  define an approximation domain $B$ that goes with $A$,
write $$\tau ~= ~\sum_{i=1}^\infty a_ix^i,$$
where the $a_i\in k$. Define $\tau_0=\tau$.
For each $n\in\N,$ define the $n^{\text{th}}$ {\it endpiece} of $\tau$, denoted $\tau_n$,
and define rings $U_n$ and $B_n$ by
$$
\tau_n ~= ~\sum_{i=n+1}^\infty a_ix^{i-n},  \qquad  U_n ~= ~k[x,\tau_n]\qquad
\text{ and } \qquad B_n~= ~k[x,\tau_n]_{(x,\tau_n)}.
$$
Set $$U ~= ~\cup _{n=0}^\infty U_n  \qquad \text{ and } \qquad B~=~\cup _{n=0}^\infty B_n.$$
\end{example}

It is straightforward to show that  $A = B$ in Example~\ref{easy}; see \cite[Chapter~6]{powerbook}.
Since the extension $k[x, \tau_n] \hookrightarrow A$ does not satisfy the
dimension inequality \cite[p. 119]{M},
the ring $A=B$ is {\it not} the localization of a finitely generated algebra over~$k$.

In general the intersection of a normal Noetherian domain with a
subfield of its field of fractions is a Krull domain, but need  not  be
Noetherian.  A directed union of normal Noetherian domains
may be a non-Noetherian Krull domain. Thus, in order to be able to iterate
our construction, we consider a local
Krull domain $(T,\n)$ that  is not assumed to be Noetherian, but is
assumed to have a Noetherian completion. To distinguish from the
 Noetherian hypothesis on $R$, we let  $T$ denote the base
domain.

\bigskip

\subheading{The construction of the approximation domain.}

\begin{settingnotation} \label{7.4.1} Let $(T,\n)$ be a
local Krull domain with  field of fractions
$F$. Assume there exists a nonzero element
$y\in \n$
such that the $y$-adic completion $\widehat{(T,(y))} := (T^*,\n^*)$
of $T$  is an  analytically normal Noetherian local domain.
It then follows that the $\n$-adic completion $\widehat
T$ of $T$ is also a normal Noetherian local domain, since the $\n$-adic
completion of $T$ is the same as the $\n^*$-adic completion
of $T^*$.  Since $T^*$ is Noetherian,
if $F^*$ denotes the field of fractions  of $T^*$, then
$T^* = \widehat T \cap F^*$. Therefore
$F\cap T^*=F\cap\widehat T$. Let $d$ denote the dimension of
the Noetherian domain $T^*$. It follows that $d$ is also
the dimension of $\widehat T$.
\footnote{If $T$ is Noetherian, then $d$ is also the dimension of $T$.  However,
if $T$ is  not Noetherian,  then the dimension of $T$ may be greater than  $d$.
This is illustrated by
taking $T$ to be the ring $B$ of Example~\ref{4.7.13}.}

(1) Assume that $T=F\cap T^*=F\cap\widehat T$, or equivalently by
Proposition~\ref{wfimph1p}.1,  that $T^*$ and $\widehat T$ are weakly flat over $T$.

(2) Let  $\widehat T[1/y]$ denote the
localization of $\widehat T$ at the powers of $y$, and
similarly, let  $T^*[1/y]$  denote the
localization of $T^*$ at the powers of $y$. The domains $\widehat T[1/y]$ and
$T^*[1/y]$ have dimension $d-1$.

(3) Let $\tau_1,\dots,\tau_s\in \n^*$ be algebraically
independent over $F$.

(4) For each $i$ with $1\le i\le s$, we have an
expansion  $\tau_i:=\Sigma_{j=1}^\infty c_{ij}y^j$ where
$c_{ij} \in T$.

(5) For each $n \in \N$ and each $i$ with  $1\le i\le s$, we define the
$n^{\text{th}}$-endpiece $\tau_{in}$ of $\tau_i$
with respect to $y$  as in Example~\ref{easy}:
\begin{equation*}\tau_{in}~:= ~ \Sigma_{j=n+1}^\infty c_{ij}y^{j-n}.\tag{\ref{7.4.1}.0}
\end{equation*}
Thus we have $\quad \tau_{in} =
y\tau_{i,n+1}+c_{i,n+1}y.$

(6) For each $n \in\N$, we define
$B_n:= T[\tau_{1n}, \dots,\tau_{sn}]_{(\n,\tau_{1n},\dots,\tau_{sn})}$.
In view of (5), we have $B_n \subseteq B_{n+1}$ and $B_{n+1}$ dominates
$B_n$ for each $n$. We define
$$  B ~:= ~\varinjlim_{n\in \N} B_n ~= ~\bigcup_{n=1}^\infty B_n, \quad
\text{ and } \quad
A ~:= ~F(\tau_1,\dots,\tau_s)~\cap ~\widehat T.$$
Thus, $B$ and $A$ are local Krull domains and $A$ birationally
dominates $B$. We are especially interested in conditions which imply
that $B=A$.

(7) Let $A^*$ denote the $y$-adic completion
 of $A$
and let $B^*$  denote the $y$-adic completion of $B$.
\end{settingnotation}

\medskip

\begin{remarks}  \label{7.4.3} The definitions of $B$ and
$B_n$ are
independent of representations for $\tau_1,\dots,\tau_s$
as power series in  $y$ with coefficients in $T$; see  \cite[Proposition~21.6]{powerbook}.
\end{remarks}

\subheading{Properties of the construction.}

The following theorem is proved in \cite[Theorem~21.7]{powerbook}.

\begin{theorem} \label{7.4.4} Assume the setting and notation of (\ref{7.4.1}).
Then the intermediate rings $B_n, \ B$ and $A$ have the following
properties:
\begin{enumerate}
\item $yA = yT^* \cap A$ and $yB = yA \cap B = yT^* \cap B$.
More generally, for every $t \in \N$, we have
$y^tA = y^tT^* \cap A$ and $y^tB = y^tA \cap B = y^tT^* \cap B$.

\item $T/y^tT = B/y^tB = A/y^tA = T^*/y^tT^*$, for each positive
integer $t$.

\item Every  ideal of
$T, B$ or $A$ that contains $y$ is finitely generated by elements
of $T$. In particular, the maximal ideal $\n$ of $T$ is
finitely generated, and the maximal ideals of $B$ and $A$ are
$\n B$ and $\n A$.

\item For every $n\in \N$:
$yB\cap B_n = (y, \tau_{1n},\hdots,\tau_{sn})B_n$, an ideal of
$B_n$ of height $s+1$.

\item Let  $P\in \Spec(A)$ be minimal over $yA$,
and let  $Q=P\cap B$  and  $W=P\cap T$. Then
$T_{W}\subseteq  B_{Q}=A_{P}$, and  all three localizations are DVRs.

\item For every $n\in \N$, $B[1/y]$ is a localization of $B_n$,
i.e., for each $n \in \N$, there exists a
multiplicatively closed subset $S_n$ of $B_n$
such that $B[1/y]=S_n^{-1}B_n$.

\item $B = B[1/y] \cap B_{\q_1}\cap\dots\cap B_{\q_r}$, where
$\q_1,\dots,\q_r$ are the prime ideals of $B$ minimal over $yB$.
\end{enumerate}
\end{theorem}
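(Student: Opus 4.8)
The driving mechanism is the endpiece relation~(5), $\tau_{in} = y\tau_{i,n+1}+c_{i,n+1}y = y(\tau_{i,n+1}+c_{i,n+1})$, which shows that every endpiece satisfies $\tau_{in}\in yB$, together with the fact from~(4) that each $\tau_i = \sum_{j\ge 1}c_{ij}y^j$ lies in $yT^*$. These are what make the construction collapse modulo powers of $y$. My plan is to prove (1) and (2) first, since (3)--(7) are formal consequences of these identities and of the fact that $T$, $B$ and $A$ are Krull domains. I would record at the outset the identification $A = F(\tau_1,\dots,\tau_s)\cap T^*$: since $\tau_i\in T^* = \widehat T\cap F^*$ one has $F(\tau_1,\dots,\tau_s)\subseteq F^*$, hence $F(\tau_1,\dots,\tau_s)\cap\widehat T = F(\tau_1,\dots,\tau_s)\cap T^*$, so that $T\subseteq B\subseteq A\subseteq T^*$.

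The heart of the matter is the decomposition $B = T + yB$, which I would prove directly from $\tau_{in}\in yB$: an element of $B_n$ is a fraction $f/g$ with $f,g\in T[\tau_{1n},\dots,\tau_{sn}]$ and $g\notin(\n,\tau_{1n},\dots,\tau_{sn})$; reducing modulo $yB$ kills every $\tau_{in}$ and sends $g$ to the image of a unit of $T$, whence $f/g\in T + yB$. Iterating gives $B = T + y^tB$ for every $t$. Combined with the completion isomorphism $T/y^tT\cong T^*/y^tT^*$ and the inclusion $B\subseteq T^*$, this yields $y^tB\cap T = y^tT$ and the isomorphisms $T/y^tT\cong B/y^tB\cong T^*/y^tT^*$, as well as the contraction $y^tB = y^tT^*\cap B$ of~(1). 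For the $A$-side, the identification $A = F(\tau_1,\dots,\tau_s)\cap T^*$ makes the contraction transparent: if $x = y^t\xi\in A$ with $\xi\in T^*$, then $\xi = x/y^t\in F(\tau_1,\dots,\tau_s)\cap T^* = A$, so $y^tT^*\cap A = y^tA$, and $A/y^tA\cong T/y^tT$ follows as for $B$, completing~(2). The remaining mixed identities $yB = yA\cap B$, etc., drop out of $A\subseteq T^*$. This first step, keeping careful track of the localizations defining the $B_n$ against the contractions from $T^*$, is the part I expect to require the most care.

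Once (1) and (2) are established, (3) follows because $T^*/yT^* = T/yT$ is Noetherian: an ideal $I$ containing $y$ corresponds to an ideal of $B/yB = T/yT$ (resp.\ $A/yA$), which is finitely generated, and lifting its generators to $T$ and adjoining $y$ generates $I$ by elements of $T$; applying this to the maximal ideals and using that they contain $y$ identifies them as $\n B$ and $\n A$ and shows $\n$ is finitely generated. For (4), the $\tau_{1n},\dots,\tau_{sn}$ are algebraically independent over $F$, so $T[\tau_{1n},\dots,\tau_{sn}]$ is a polynomial ring; reduction modulo $(y,\tau_{1n},\dots,\tau_{sn})B_n$ together with (2) identifies $yB\cap B_n$ with this ideal, and its height is $s+1$ because the $s$ variables $\tau_{in}$ form a prime of height $s$ with quotient $T$, over which $y$ cuts out a height-one prime of the Krull domain $T$.

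Finally, (5)--(7) are Krull-domain facts built on (1) and (2). For (5), the isomorphisms $A/yA\cong B/yB\cong T/yT$ from~(2) put the minimal primes over $yA$, $yB$, $yT$ in bijection under contraction, so $Q = P\cap B$ and $W = P\cap T$ are minimal over $yB$ and $yT$; as $A$, $B$, $T$ are Krull and $y\ne 0$, these primes have height one and $A_P$, $B_Q$, $T_W$ are DVRs. The equality $B_Q = A_P$ follows because $A_P$ is a local ring dominating the DVR $B_Q$ inside their common fraction field $F(\tau_1,\dots,\tau_s)$, and a DVR is maximal under domination among local subrings of its fraction field. For (6), iterating the endpiece relation expresses each later endpiece in $B_n$ after inverting $y$ (as $\tau_{i,n+1} = \tau_{in}/y - c_{i,n+1}$, and so on), so that upon inverting $y$ every $B_m$ with $m\ge n$ becomes one and the same localization of $B_n$; hence $B[1/y] = \bigcup_m B_m[1/y] = S_n^{-1}B_n$ for a suitable multiplicatively closed $S_n\subseteq B_n$. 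For (7), $B = \bigcap_{\hgt\p = 1}B_\p$ since $B$ is Krull; the height-one primes $\p$ with $y\notin\p$ contribute exactly $B[1/y]$, while those with $y\in\p$ are the finitely many primes $\q_1,\dots,\q_r$ minimal over the divisorial ideal $yB$, giving $B = B[1/y]\cap B_{\q_1}\cap\dots\cap B_{\q_r}$.
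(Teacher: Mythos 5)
The paper itself contains no proof of Theorem~\ref{7.4.4}: it is quoted from \cite[Theorem~21.7]{powerbook}, so your argument can only be judged on its own merits rather than against an in-paper proof. Most of it holds up well: the decomposition $B=T+yB$ extracted from $\tau_{in}=y(\tau_{i,n+1}+c_{i,n+1})$, the sandwich $T/y^tT\to B/y^tB\to T^*/y^tT^*$ whose composite is the completion isomorphism, the deduction of $y^tT^*\cap A=y^tA$ from the identification $A=F(\tau_1,\ldots,\tau_s)\cap T^*$, and your treatments of (3), (5) and (7) are correct.

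Two steps, however, are genuinely flawed, though both are repairable. In (6), the claim that every $B_m$ with $m\ge n$ ``becomes one and the same localization of $B_n$'' after inverting $y$ is false: the rings $B_m[1/y]$ strictly increase in general. Indeed, $g:=1+\tau_{i,n+1}$ is a unit of the local ring $B_{n+1}$, so $1/g\in B_{n+1}[1/y]$; but $yg=\tau_{in}+y(1-c_{i,n+1})$ generates a prime ideal of $B_n$ not containing $y$ (killing it turns $B_n$ into a localization of the polynomial ring over $T$ in the remaining endpieces, where $y\neq 0$), so no power of $y$ lies in $(yg)B_n$ and $1/g=y/(yg)\notin B_n[1/y]$. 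What your endpiece relation does give is the equality of polynomial rings $T[\tau_{1m},\ldots,\tau_{sm}][1/y]=T[\tau_{1n},\ldots,\tau_{sn}][1/y]$ for all $m\ge n$; hence each $B_m[1/y]$ is a localization of $B_n$ at a multiplicative set that grows with $m$, and you need the additional (easy) observation that a directed union of localizations of $B_n$ inside its field of fractions is again a localization of $B_n$. In (4), your argument proves only the inequality $\hgt\,(y,\tau_{1n},\ldots,\tau_{sn})B_n\ge s+1$: since $T$, and hence $B_n$, need not be Noetherian, Krull's generalized principal ideal theorem is unavailable, and heights do not simply ``add'' as your final sentence suggests, so the upper bound requires a separate argument. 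One fix: let $\p_0$ be a minimal prime of $yT$, which has height one because $T$ is Krull; every prime of $B_n$ contained in $(\p_0,\tau_{1n},\ldots,\tau_{sn})B_n$ contracts into $\p_0$, hence survives in $T_{\p_0}[\tau_{1n},\ldots,\tau_{sn}]$, a polynomial ring over a DVR and therefore Noetherian, where the extended ideal is generated by $s+1$ elements; thus $(\p_0,\tau_{1n},\ldots,\tau_{sn})B_n$ is a prime of height exactly $s+1$ containing $(y,\tau_{1n},\ldots,\tau_{sn})B_n$, and combined with your lower bound this gives the stated height.
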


 The next theorem  from \cite[Theorem~21.8]{powerbook} is also useful in the sequel.
\begin{theorem}\label{7.4.5} With the setting and notation of
(\ref{7.4.1}),
the intermediate rings $A$ and $B$ have the following
properties:
\begin{enumerate}
\item $A$ and $B$ are  local Krull domains.
\item $B\subseteq A$, with $A$ dominating $B$.
\item $A^*=B^*=T^*$.
\item If $B$ is Noetherian, then $B = A$.
\end{enumerate}
Moreover, if  $T$ is a unique
factorization domain (UFD) and $y$ is a prime element of $T$, then $B$
is a UFD.
\end{theorem}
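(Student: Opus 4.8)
The plan is to dispatch the four numbered assertions and the concluding \textsc{ufd} statement in turn, relying throughout on Theorem~\ref{7.4.4}, which already records the maximal ideals of $B$ and $A$ as well as the valuation-theoretic structure of $B$. For assertion (1), the locality of $B$ and $A$ is immediate from Theorem~\ref{7.4.4}(3), which identifies their maximal ideals as $\n B$ and $\n A$. To see that $A$ is a Krull domain I would observe that $A = F(\tau_1,\dots,\tau_s)\cap \widehat T$ is the intersection of the normal Noetherian (hence Krull) domain $\widehat T$ with a subfield of its field of fractions, and such an intersection is always a Krull domain. For $B$ I would invoke the representation $B = B[1/y]\cap B_{\q_1}\cap\cdots\cap B_{\q_r}$ of Theorem~\ref{7.4.4}(7): here $B[1/y]$ is a localization of the Krull domain $B_n$ by Theorem~\ref{7.4.4}(6), and each $B_{\q_i}$ is a \textsc{dvr} by Theorem~\ref{7.4.4}(5). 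Since the essential valuations of $B[1/y]$ do not involve $y$ while the $B_{\q_i}$ are exactly the valuations centered on the finitely many primes over $yB$, this exhibits $B$ as a locally finite intersection of \textsc{dvr}s with common field of fractions, so $B$ is a Krull domain.

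For assertion (2) I would verify $B\subseteq A$ directly from the endpiece formula $\tau_{in} = (\tau_i - \sum_{j\le n} c_{ij}y^j)/y^n$, which lies in $F(\tau_1,\dots,\tau_s)$ and, being a convergent power series in $y$, also lies in $T^*\subseteq\widehat T$; localizing $T[\tau_{1n},\dots,\tau_{sn}]$ at $(\n,\tau_{1n},\dots,\tau_{sn})$ inverts only elements that are units in the local ring $T^*$, so $B_n\subseteq F(\tau_1,\dots,\tau_s)\cap\widehat T = A$ and hence $B\subseteq A$. The extension is birational because the $\tau_{in}$ and the $\tau_i$ generate the same extension field of $F$, and domination follows since $\n A\cap B$ is a proper prime ideal of $B$ containing the maximal ideal $\n B$, hence equal to it. For assertion (3) I would take inverse limits in the isomorphisms $B/y^tB = A/y^tA = T^*/y^tT^*$ of Theorem~\ref{7.4.4}(2); as $T^*$ is already $(y)$-adically complete, $\varprojlim_t T^*/y^tT^* = T^*$, giving $A^* = B^* = T^*$.

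Assertion (4) is the heart of the theorem and the step I expect to require the most care. The plan is to exploit that, when $B$ is Noetherian and local with $y$ in its maximal ideal, the $(y)$-adic completion $B\hookrightarrow B^* = T^*$ is \emph{faithfully flat}, whence $vB^*\cap B = vB$ for every $v\in B$. Noting that $A\subseteq F^*\cap\widehat T = T^* = B^*$, I would take $a\in A$ and write $a = u/v$ with $u,v\in B$ (possible by birationality); then $u = va\in vB^*\cap B = vB$, so $a\in B$. This gives $A\subseteq B$ and therefore $B = A$. The delicate point here is precisely the passage from membership in the completion $B^*$ back to membership in $B$, which is exactly what faithful flatness supplies and which fails without the Noetherian hypothesis.

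Finally, for the \textsc{ufd} statement I would apply Nagata's criterion. The element $y$ is prime in $B$ because $B/yB\cong T/yT$ by Theorem~\ref{7.4.4}(2) and $T/yT$ is a domain when $y$ is prime in the \textsc{ufd} $T$. Next, $B[1/y]$ is a localization of $B_n = T[\tau_{1n},\dots,\tau_{sn}]_{(\n,\tau_{1n},\dots,\tau_{sn})}$ by Theorem~\ref{7.4.4}(6); since $T$ is a \textsc{ufd} the polynomial ring $T[\tau_{1n},\dots,\tau_{sn}]$ is a \textsc{ufd}, and localizations of \textsc{ufd}s are \textsc{ufd}s, so $B[1/y]$ is a \textsc{ufd}. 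As $B$ is a Krull domain in which the multiplicative set generated by the prime element $y$ inverts to a \textsc{ufd}, Nagata's theorem yields that $B$ is a \textsc{ufd}.
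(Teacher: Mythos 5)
Your proof is correct. One thing to note at the outset: the paper does not actually prove Theorem~\ref{7.4.5} --- it quotes the result from \cite[Theorem~21.8]{powerbook} --- so there is no internal argument to measure yours against; what one can say is that your derivation uses precisely the tools the paper makes available, namely Theorem~\ref{7.4.4} and the facts recorded in Setting~\ref{7.4.1}, and is almost certainly the route of the cited proof. All the steps check. For (1), $\widehat T$ is Krull and the intersection of a Krull domain with a subfield of its fraction field is Krull, while $B=B[1/y]\cap B_{\q_1}\cap\dots\cap B_{\q_r}$ is a finite intersection of Krull domains with common fraction field (a localization of the Krull domain $B_n$, plus finitely many DVRs), hence Krull. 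For (2), your claim that the localization defining $B_n$ inverts only units of $T^*$ is valid because $(\n,\tau_{1n},\dots,\tau_{sn})T[\tau_{1n},\dots,\tau_{sn}]=\n^*\cap T[\tau_{1n},\dots,\tau_{sn}]$. For (3), the inverse-limit computation is the standard one, using that the isomorphisms of Theorem~\ref{7.4.4}(2) are induced by the inclusions $B\subseteq A\subseteq T^*$. For (4), flatness of the $(y)$-adic completion of the Noetherian local ring $B$, together with locality of $B^*$ (since $B^*/yB^*\cong B/yB$ and $y$ lies in the Jacobson radical of $B^*$), gives faithful flatness, and your fraction argument $u=va\in vB^*\cap B=vB$ correctly descends membership to $B$; the one point worth making explicit is that $A\subseteq T^*$ (not merely $A\subseteq\widehat T$), which follows from $T^*=\widehat T\cap F^*$ as recorded in Setting~\ref{7.4.1}, and is what legitimizes writing $a\in B^*$. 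Finally, the UFD step is Nagata's theorem \cite[Theorem~6.3]{Sam} --- the very result the paper invokes in proving Theorem~\ref{4.2.11t} --- applied to the Krull domain $B$ and the multiplicative set generated by the prime element $y$, with $B[1/y]$ a UFD as a localization of a localized polynomial ring over the UFD $T$; this is exactly right.
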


We use the following theorem \cite[Theorem~21.13]{powerbook}  to
establish the Noetherian property.

\begin{theorem} \label{7.5.45}
Assume the notation of  Setting~\ref{7.4.1}.
Thus $(T,\n)$ is a local Krull domain with field of fractions $F$,
and $y \in \n$ is
such that the $(y)$-adic completion $(T^*, \n^*)$ of $T$ is an analytically normal
Noetherian local domain and $T = T^* \cap F$.  For
elements $\tau_1, \ldots, \tau_s \in \n^*$ that are algebraically
independent over $T$, the following are equivalent:
\begin{enumerate}
\item The extension $T[\tau_1, \ldots, \tau_s] \hookrightarrow T^*[1/y]$ is flat.
\item  The elements
$\tau_1,\dots,\tau_s$ are primarily limit-intersecting in $y$ over $T$.
\item The intermediate rings $A$ and $B$ are equal and are Noetherian.
\item The constructed ring $B$ is Noetherian.
\end{enumerate}
Moreover, if these equivalent conditions hold, then the Krull domain
$T$ is Noetherian.
\end{theorem}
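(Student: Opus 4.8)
The plan is to treat (1) $\Leftrightarrow$ (2) as nothing more than Definition~\ref{prlidef}, so that the genuine content is the equivalence of the flatness of $\varphi$ with the Noetherian statements (3) and (4), together with the final descent to $T$. First I would record the two cheap implications among the Noetherian conditions: (3) $\Rightarrow$ (4) is immediate, while conversely (4) $\Rightarrow$ (3) is exactly Theorem~\ref{7.4.5}(4), which guarantees that once $B$ is Noetherian one automatically has $B = A$, so that $A = B$ is Noetherian. Thus the whole theorem reduces to the single equivalence ``$B$ is Noetherian $\Leftrightarrow$ $\varphi$ is flat,'' plus the moreover clause.

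For the direction (4) $\Rightarrow$ (1) I would pass to the $(y)$-adic completion, using that it is $T^*$ by Theorem~\ref{7.4.5}(3). If $B$ is Noetherian, then the completion map $B \hookrightarrow T^*$ is flat, hence so is its localization $B[1/y] \hookrightarrow T^*[1/y]$. By Theorem~\ref{7.4.4}(6) the ring $B[1/y]$ is a localization of $B_n$, and since the endpiece relation $\tau_{in} = y\tau_{i,n+1} + c_{i,n+1}y$ shows that $T[\tau_1,\dots,\tau_s][1/y] = T[\tau_{1n},\dots,\tau_{sn}][1/y]$, the ring $B[1/y]$ is in fact a localization of $T[\tau_1,\dots,\tau_s]$. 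Therefore $\varphi$ factors as the localization $T[\tau_1,\dots,\tau_s] \hookrightarrow B[1/y]$ followed by the flat map $B[1/y] \hookrightarrow T^*[1/y]$, and a composite of flat maps is flat.

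The crux is (1) $\Rightarrow$ (4). Here I would again work through the completion and aim to prove that $T^* = B^*$ is flat over $B$; granting this, $B \hookrightarrow T^*$ is a local, hence faithfully flat, homomorphism, and faithfully flat descent of the ascending chain condition from the Noetherian ring $T^*$ forces $B$ to be Noetherian. To get flatness of $B \hookrightarrow T^*$ I would glue two pieces of data: away from $y$ the map $B[1/y] \hookrightarrow T^*[1/y]$ is flat, because $B[1/y]$ is a localization of $T[\tau_1,\dots,\tau_s]$ and $\varphi$ is assumed flat; and modulo powers of $y$ the maps are the isomorphisms $B/y^tB = T^*/y^tT^*$ of Theorem~\ref{7.4.4}(2). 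Since $y$ is a nonzerodivisor on $T^*$ and $T^*$ is $y$-adically complete, a local flatness criterion along the principal ideal $(y)$ should assemble these two charts into flatness of $B \hookrightarrow T^*$. \emph{This gluing step is the main obstacle:} the base $B$ is a priori non-Noetherian, so the usual local criterion for flatness is not available off the shelf, and one must instead exploit the directed union $B = \bigcup_n B_n$, testing flatness on finitely generated ideals and using the exactness supplied by the nonzerodivisor $y$ together with the completeness of $T^*$.

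Finally, for the moreover clause I would observe that $B$ is always faithfully flat over $T$, independently of the hypothesis on $\varphi$: each $B_n = T[\tau_{1n},\dots,\tau_{sn}]_{(\n,\tau_{1n},\dots,\tau_{sn})}$ is a localization of a polynomial ring over $T$ and so is $T$-flat, whence $B = \varinjlim_n B_n$ is a filtered colimit of flat $T$-modules and is flat over $T$; moreover $B$ is local with maximal ideal $\n B$ lying over $\n$ by Theorem~\ref{7.4.4}(3), so $T \hookrightarrow B$ is a flat local homomorphism and therefore faithfully flat. Once the equivalent conditions have yielded that $B$ is Noetherian, faithfully flat descent along $T \hookrightarrow B$ transfers the ascending chain condition back to $T$, proving that $T$ is Noetherian.
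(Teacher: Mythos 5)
Your handling of everything except the crucial implication is correct, and it matches what one would expect: (1) $\Leftrightarrow$ (2) is Definition~\ref{prlidef}; (3) $\Leftrightarrow$ (4) follows from Theorem~\ref{7.4.5}(4); your argument for (4) $\Rightarrow$ (1) is sound, since a Noetherian local $B$ has flat $(y)$-adic completion $B^* = T^*$ (Theorem~\ref{7.4.5}(3)), and $B[1/y]$ is a localization of $T[\tau_1,\dots,\tau_s]$ by Theorem~\ref{7.4.4}(6) together with the endpiece relations; and the ``moreover'' clause by faithfully flat descent along the flat local map $T \hookrightarrow B$ is fine. Note also that the paper itself contains no proof of this theorem --- it is quoted from \cite[Theorem~21.13]{powerbook} --- so the only meaningful comparison is with that reference, not with an argument in the text.

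The genuine gap is the implication (1) $\Rightarrow$ (4), which is the entire content of the theorem, and which you explicitly leave as a strategy: you reduce it to flatness of $B \hookrightarrow T^*$ and then say a local flatness criterion along $(y)$ ``should assemble'' the flatness of $B[1/y] \hookrightarrow T^*[1/y]$ with the isomorphisms $B/y^tB \cong T^*/y^tT^*$, while conceding that the standard criterion is unavailable because $B$ is not known to be Noetherian. That concession is exactly where the theorem lives, and the suggested repair does not obviously close. Concretely: for $I = (b_1,\dots,b_k)B$, put $K = \ker(T^{*k} \to T^*)$ and let $L$ be the $T^*$-submodule generated by the $B$-syzygies, so that $\Tor_1^B(B/I,T^*) \cong K/L$; flatness off $y$ and Noetherianness of $T^*$ give $y^NK \subseteq L$, and Krull intersection over $T^*$ reduces the problem to showing $K \subseteq L + y^nT^{*k}$ for all $n$; but when one approximates a relation in $K$ modulo $y^n$ by a vector over $B$ (using $B/y^nB \cong T^*/y^nT^*$), the error term lands in $IT^* \cap B$, and absorbing it requires precisely the contraction property $IT^* \cap B = I$ --- which is equivalent to the Noetherianness of $B$ being proved, so the natural argument is circular. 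Your fallback of exploiting the directed union $B = \bigcup_n B_n$ is also blocked in this Krull setting, because $T$ is not assumed Noetherian (its Noetherianness is a \emph{conclusion}), so the rings $B_n$ are not known to be Noetherian either. Finally, the paper's own Theorem~\ref{wftheorem} and Example~\ref{wfexample} show that no formal gluing can work: there the mod-$y^t$ isomorphisms hold and the map off $y$ even satisfies LF$_{d-1}$, yet $B = A$ is not Noetherian and $B \to T^*$ is not flat; so a correct proof must use the full strength of flatness of $\varphi$ in an essential, nonformal way. Supplying that step is exactly what \cite[Theorem~21.13]{powerbook} does; as written, your proposal establishes (1) $\Leftrightarrow$ (2), (3) $\Leftrightarrow$ (4), (4) $\Rightarrow$ (1), and the final clause, but not the converse direction that makes the theorem nontrivial.
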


We consider the following  properties of an extension of Krull domains.

\begin{definitions}\label{wfetc}
 Let $S\hookrightarrow T$ be an
extension of Krull domains.
\begin{enumerate}
\item  \label{wf12} We say that the extension $S \hookrightarrow T$ is
{\it weakly flat}, or  that $T$ is
{\it weakly flat} over $S$, if every
height-one prime
ideal $P$ of $S$ with $PT\neq T$ satisfies
$PT \cap S = P$.
\item  \label{h1p12} We say that the extension $S \hookrightarrow T$ is
{\it height-one preserving}, or that $T$ is a
{\it height-one preserving} extension  of $S$,  if for every
height-one prime ideal $P$ of $ S$ with $PT \ne T$ there exists  a
height-one prime ideal $Q$ of $T$ with  $PT\subseteq Q$.
\item  \label{Lfd12}  For
 $d \in \N$, we say that $\varphi:S \hookrightarrow T$
{\it satisfies LF$_d$}  ({\it locally flat in height $d$}),
if, for each $P \in \Spec  T$ with $\hgt P
\le d$, the composite map $S \to T \to T_P$ is flat.
\end{enumerate}
\end{definitions}

The condition LF$_1$ is equivalent to weak flatness. If $\dim T \le  d$, then
the condition LF$_d$ is equivalent to flatness.
Proposition~\ref{wfimph1p} demonstrates the relevance of the weak flatness property
for  an extension of Krull domains.

\begin{proposition}\label{wfimph1p} $\phantom{i}$ \cite[Corollary~12.4]{powerbook}
Let $\varphi: S\hookrightarrow T$ be an
extension of
Krull domains and let $F$ denote the field of fractions of $S$.
\begin{enumerate}
\item  Assume that $PT\neq T$ for every height-one
prime ideal $P$ of $S$.
Then $S \hookrightarrow T$  is
  weakly flat $\iff \, S = F\cap T$.
\item If $S \hookrightarrow T$ is weakly flat, then
$\varphi$ is
height-one preserving and, moreover, for every height-one
prime ideal $P$ of $S$ with $PT\neq T$, there is a height-one
prime ideal $Q$ of $T$ with $Q\cap S=P$.
\end{enumerate}

\end{proposition}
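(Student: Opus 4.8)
The plan is to reduce everything to the DVR $S_P$ sitting inside a suitable localization of $T$, exploiting the defining intersection property of Krull domains, namely $S=\bigcap_{\hgt P=1}S_P$ with each $S_P$ a DVR, together with the standard facts that a localization of a Krull domain is again a Krull domain and that the prime correspondence under localization preserves height. Throughout, I would fix a height-one prime $P$ of $S$, let $v_P$ be the associated valuation, choose $\pi\in P$ with $v_P(\pi)=1$ so that $PS_P=\pi S_P$, and set $W:=S\setminus P$ and $T':=W^{-1}T$. Since $S_P\subseteq T'$ and $PS_P=\pi S_P$, one has $PT'=\pi T'$, which I will use freely. The single technical point on which both parts rest is that weak flatness localizes: from $PT\cap S=P$ one should deduce $PT'\cap S_P=PS_P$. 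I expect this to be the main obstacle, because it needs a clearing-of-denominators argument: given $u\in PT'\cap S_P$, write $u=v/w$ with $w\in W$ and express $u$ as a $T'$-combination of elements of $P$, then multiply through by a common denominator from $W$ to land in $PT\cap S=P$, and use that $P$ is prime to reach a contradiction if $u$ were a unit of $S_P$. I would isolate this as a preliminary lemma.

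For part (1), the inclusion $S\subseteq F\cap T$ is immediate, so the content is $F\cap T\subseteq S$; by $S=\bigcap_P S_P$ it suffices to prove $F\cap T\subseteq S_P$ for every height-one $P$, and the hypothesis $PT\neq T$ for all such $P$ guarantees that weak flatness, hence the localized identity $PT'\cap S_P=PS_P$, applies to each $P$. Assuming weak flatness, take $x\in F\cap T$ and suppose $v_P(x)=-n<0$; then $u:=\pi^n x$ is a unit of $S_P$, while $u=\pi\cdot(\pi^{n-1}x)\in PT'$ since $\pi^{n-1}x\in T'$, so $u\in PT'\cap S_P=PS_P$, contradicting that $u$ is a unit. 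Hence $v_P(x)\ge 0$ for all $P$ and $x\in S$. Conversely, assuming $S=F\cap T$, suppose some height-one $P$ had $PT\cap S\supsetneq P$, and pick $a\in(PT\cap S)\setminus P$; then $a$ is a unit of $T'$ lying in $PT'=\pi T'$, forcing $\pi$ to be a unit of $T'$, so $1/\pi=t/s$ with $t\in T$ and $s\in W$. Then $s/\pi=t\in T$ and $s/\pi\in F$, so $s/\pi\in F\cap T=S$, yet $v_P(s/\pi)=-1<0$ gives $s/\pi\notin S_P\supseteq S$, a contradiction.

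For part (2), fix a height-one prime $P$ of $S$ with $PT\neq T$; weak flatness gives $PT\cap S=P$, whence $PT'\neq T'$ and $\pi$ is a nonzero nonunit of the Krull domain $T'$. Therefore $\pi$ lies in some height-one prime $Q'$ of $T'$, using that a nonzero nonunit of a Krull domain must lie in a height-one prime (otherwise $1/\pi\in\bigcap_{\hgt Q'=1}T'_{Q'}=T'$). Contracting, $Q:=Q'\cap T$ is a height-one prime of $T$ with $\pi\in Q$ and $Q\cap S\subseteq P$, the latter because $Q\cap W=\emptyset$. Since $0\neq\pi\in Q\cap S\subseteq P$ and $\hgt P=1$, no prime lies strictly between $0$ and $P$, so $Q\cap S=P$; in particular $PT\subseteq Q$, which simultaneously yields the height-one preserving property and the stronger conclusion $Q\cap S=P$. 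Beyond the localization lemma, the only facts I would invoke are that localizations of Krull domains are Krull with height preserved under the prime correspondence, and that nonzero nonunits lie in height-one primes; I would cite these as standard rather than reprove them.
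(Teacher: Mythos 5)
Your proposal cannot be matched against an in-paper argument, because the paper gives none: Proposition~2.8 is quoted, without proof, from Corollary~12.4 of the authors' book in progress, so the only question is whether your self-contained proof is correct --- and it is. The pivotal step, your localization lemma that weak flatness descends to $PT'\cap S_P=PS_P$ (where $W=S\setminus P$ and $T'=W^{-1}T$), works exactly as you sketch: if $u=v/w\in PT'\cap S_P$ with $v\in S$, $w\in W$, and $u=\sum_i p_i t_i/w_i$ with $p_i\in P$, $t_i\in T$, $w_i\in W$, then setting $w_0=\prod_i w_i$ one gets $ww_0u=w_0v\in PT\cap S=P$, and primeness of $P$ with $w_0\notin P$ gives $v\in P$, i.e.\ $u\in PS_P$. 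Granting that lemma, both directions of (1) are sound (in the forward direction $\pi^n x$ is a unit of $S_P$ lying in $PT'\cap S_P$, a contradiction; in the converse the unit $a\in W$ lying in $\pi T'$ forces $1/\pi\in T'$ and hence $s/\pi\in F\cap T=S$ with $v_P(s/\pi)=-1$, a contradiction), and the contraction argument in (2) is also valid: $PT'\neq T'$ follows from the lemma since $1\notin PS_P$, the nonzero nonunit $\pi$ of the Krull domain $T'$ lies in a height-one prime $Q'$, and $Q:=Q'\cap T$ has height one, satisfies $Q\cap S\subseteq P$ because $Q\cap W=\emptyset$, and contains $\pi\neq 0$, forcing $Q\cap S=P$ and $PT\subseteq Q$. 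The background facts you invoke ($S=\bigcap_{\hgt P=1}S_P$, Krullness and height preservation under localization, nonzero nonunits of a Krull domain lying in height-one primes) are standard and fine to cite. One observation worth recording: your converse in (1) never uses the blanket hypothesis $PT\neq T$; indeed your argument proves the stronger fact that $S=F\cap T$ already forces $PT\neq T$ for every height-one prime $P$ of $S$. Thus the hypothesis is needed only for the implication from weak flatness to $S=F\cap T$, where it is genuinely necessary: for instance $T=F$ is vacuously weakly flat over any Krull domain $S$ that is not a field, yet $F\cap T=F\neq S$.
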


Theorem~\ref{7.5.5}   states that weak flatness
of the map $\varphi$ of Equation~\ref{Krullset}.0 is equivalent to equality of the
intersection domain $A$ with its approximation domain $B$.

\begin{theorem} \label{7.5.5}   \cite[Theorem~21.14]{powerbook}
 Assume the notation of  Setting~\ref{7.4.1}.
Thus $(T,\n)$ is a local Krull domain with field of fractions $F$,
and $y \in \n$ is
such that the $(y)$-adic completion $(T^*, \n^*)$ of $T$ is an analytically normal
Noetherian local domain and $T = T^* \cap F$.  For
elements $\tau_1, \ldots, \tau_s \in \n^*$ that are algebraically
independent over $T$, the following are equivalent:
\begin{enumerate}
\item
The intersection domain $A$ is equal to its approximation domain $B$.
\item The map $\varphi: T[\tau_1, \ldots, \tau_s]  \longrightarrow T^*[1/y]$ is weakly flat.
\item The map  $B\longrightarrow T^*[1/y]$ is weakly flat.
\item The map  $B\longrightarrow T^*$ is weakly flat.
\end{enumerate}
\end{theorem}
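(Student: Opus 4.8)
The plan is to treat condition (4) as the hub, since $B\hookrightarrow T^*$ is a \emph{local} homomorphism and is therefore the easiest of the four maps to analyze, and then to peel off the remaining conditions from it. Two structural reductions come first. Since $\tau_1,\dots,\tau_s\in\n^*\subseteq F^*=\mathcal Q(T^*)$ and $F\subseteq F^*$, we have $\mathcal Q(B)=F(\tau_1,\dots,\tau_s)\subseteq F^*$, so the defining intersection $A=F(\tau_1,\dots,\tau_s)\cap\widehat T$ equals $\mathcal Q(B)\cap T^*$ by the identity $T^*=\widehat T\cap F^*$ of Setting~\ref{7.4.1}; thus (1) is literally the assertion $B=\mathcal Q(B)\cap T^*$. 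Next, because the maximal ideal of $B$ is $\n B$ (Theorem~\ref{7.4.4}(3)) and $\n\subseteq\n^*$, the inclusion $B\hookrightarrow T^*$ is local: every proper ideal $I$ of $B$ lies in $\n B$, so $IT^*\subseteq\n T^*\subseteq\n^*\neq T^*$. In particular $PT^*\neq T^*$ for every height-one prime $P$ of $B$, so the hypothesis of Proposition~\ref{wfimph1p}(1) is met automatically, and that proposition applied to $B\hookrightarrow T^*$ yields (1)$\Leftrightarrow$(4) at once: $B\to T^*$ is weakly flat iff $B=\mathcal Q(B)\cap T^*=A$.

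I would then establish the side links by bookkeeping with height-one primes and the element $y$. For (2)$\Leftrightarrow$(3): by Theorem~\ref{7.4.4}(6), $B[1/y]$ is a localization of $S=T[\tau_1,\dots,\tau_s]$ in which $y$ is inverted, so height-one primes of $B[1/y]$ contract bijectively to those height-one primes of $S$ that avoid $y$ and survive the localization. Any prime $P$ with $PT^*[1/y]\neq T^*[1/y]$ automatically avoids $y$ (as $y$ is a unit in $T^*[1/y]$) and cannot meet the multiplicative set defining the localization, so the weak-flatness condition transfers verbatim between $S\to T^*[1/y]$ and $B[1/y]\to T^*[1/y]$, contraction of extended ideals commuting with localization; since condition (3) is exactly weak flatness of $B[1/y]\to T^*[1/y]$, we get (2)$\Leftrightarrow$(3). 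For (4)$\Rightarrow$(3): given $PT^*\cap B=P$ for all height-one $P$, and given a tested prime $P$ (so $y\notin P$), any $x\in PT^*[1/y]\cap B$ satisfies $y^mx\in PT^*\cap B=P$ for some $m$, whence $x\in P$; localizing gives $PT^*[1/y]\cap B[1/y]=PB[1/y]$. Together with (1)$\Leftrightarrow$(4) this gives the chain (1)$\Leftrightarrow$(4)$\Rightarrow$(3)$\Leftrightarrow$(2).

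It remains to close the loop with the one substantive implication (3)$\Rightarrow$(1), equivalently (3)$\Rightarrow$(4). Here I would first use Theorem~\ref{7.4.4}(5),(7) to reduce $B=A$ to $B[1/y]=A[1/y]=\mathcal Q(B)\cap T^*[1/y]$: the primes of $B$ and $A$ lying over $y$ give equal localizations $B_{q_i}=A_{P_i}$, so the representations $B=B[1/y]\cap\bigcap_iB_{q_i}$ and its analogue for $A$ collapse the problem to the $y$-inverted rings. The difficulty, and the heart of the theorem, is that $B[1/y]\hookrightarrow T^*[1/y]$ is no longer local, so a height-one prime $P$ of $B$ off $y$ could in principle have $PT^*[1/y]=T^*[1/y]$ and thereby escape the weak-flatness hypothesis (3) entirely. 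The main obstacle is to rule out such ``lost'' essential valuations: I expect to show that weak flatness, via the lying-over statement of Proposition~\ref{wfimph1p}(2) for the tested height-one primes, combined with the completion identity $B/y^tB\cong T^*/y^tT^*$ of Theorem~\ref{7.4.4}(2) for the primes meeting $y$, forces $PT^*\cap B=P$ for \emph{every} height-one prime $P$ of $B$; equivalently, no essential valuation of the Krull domain $B$ fails to be induced from $T^*$. Granting this, $B[1/y]=\mathcal Q(B)\cap T^*[1/y]$, hence $B=A$, and the cycle closes. I anticipate that verifying the nonexistence of these lost primes is the step demanding the most care, precisely because it is where the non-Noetherian nature of $B$ and the passage from the local map $B\to T^*$ to the non-local map $B[1/y]\to T^*[1/y]$ interact.
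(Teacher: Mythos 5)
Your proposal cannot be checked against a proof inside this paper, because the paper gives none: Theorem~\ref{7.5.5} is quoted from \cite[Theorem~21.14]{powerbook}. Judged on its own terms, your architecture is sound and all but one step is complete. The hub equivalence (1)$\Leftrightarrow$(4) is correct: $A=F(\tau_1,\dots,\tau_s)\cap\widehat T=\mathcal Q(B)\cap F^*\cap\widehat T=\mathcal Q(B)\cap T^*$ by the identity $T^*=F^*\cap\widehat T$ of Setting~\ref{7.4.1}; the inclusion $B\hookrightarrow T^*$ is local because the maximal ideal of $B$ is $\n B$ (Theorem~\ref{7.4.4}(3)) and $\n T^*\subseteq\n^*$; so Proposition~\ref{wfimph1p}(1) applies with its hypothesis automatically satisfied. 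The transfers (2)$\Leftrightarrow$(3) (via Theorem~\ref{7.4.4}(6), together with your observation that a height-one prime of $T[\tau_1,\dots,\tau_s]$ meeting the multiplicative set blows up in $T^*[1/y]$, which holds because $B[1/y]\subseteq T^*[1/y]$) and (4)$\Rightarrow$(3) (clearing powers of $y$) are also correct.

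The step you leave as an expectation --- ruling out ``lost'' height-one primes $P$ of $B$ with $y\notin P$ and $PT^*[1/y]=T^*[1/y]$ --- is genuinely the crux, and it does close, in fact more easily than you anticipate: it is \emph{unconditional}, needing neither weak flatness nor the lying-over statement of Proposition~\ref{wfimph1p}(2). (Lying-over could never do this job anyway: it applies only to primes passing the test $PT^*[1/y]\neq T^*[1/y]$, which is exactly what a lost prime fails.) Argue as follows: a lost prime means $y^n\in PT^*$ for some $n$, hence $y^n\in PT^*+y^{n+1}T^*$. The inclusion $B\hookrightarrow T^*$ induces an isomorphism $B/y^{n+1}B\cong T^*/y^{n+1}T^*$ by Theorem~\ref{7.4.4}(2), under which the ideal $(PT^*+y^{n+1}T^*)/y^{n+1}T^*$ corresponds to $(P+y^{n+1}B)/y^{n+1}B$; therefore $y^n=p+y^{n+1}b$ with $p\in P$, $b\in B$. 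Then $y^n(1-yb)=p\in P$, and $1-yb$ is a unit of the local ring $B$, so $y^n\in P$, hence $y\in P$, a contradiction. With this in hand, (3)$\Rightarrow$(4) is immediate: for a height-one prime $P\not\ni y$, condition (3) gives $PT^*\cap B\subseteq PT^*[1/y]\cap B=P$; for a height-one prime $Q\ni y$, the isomorphism $B/yB\cong T^*/yT^*$ supplies a prime $Q^*$ of $T^*$ with $QT^*\subseteq Q^*$ and $Q^*\cap B=Q$, so $QT^*\cap B=Q$ holds automatically. This also renders your detour through the representation $B=B[1/y]\cap B_{\q_1}\cap\dots\cap B_{\q_r}$ of Theorem~\ref{7.4.4}(7) unnecessary (though that reduction is correct as stated): once there are no lost primes, (3)$\Rightarrow$(4)$\Rightarrow$(1) follows from the hub equivalence alone.
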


\section{Primarily limit-intersecting elements} \label{section3}

In this section, we establish the existence of primarily limit-intersecting elements over countable excellent normal local domains.

We use Corollary~\ref{16.2.3p} and Lemma~\ref{6.3.7}  in the proof of  Theorem~\ref{exprimli}.

\smallskip

\begin{theorem}\label{13.2.0p} $\phantom{i}$ \cite[Theorem 11.3]{powerbook}
Let $(R,\m), (S,\n)$ and $(T,{\bf \ell})$
be Noetherian local rings, and assume there exist local homomorphisms:
$$
R \longrightarrow S\longrightarrow T,$$
such that
\begin{enumerate}
\item [(i)] $R \to T$ is flat  and $T/\m T$  is Cohen-Macaulay.
\item [(ii)] $R \to S$ is flat and $S/\m S$ is a regular local ring.
\end{enumerate}
Then the following statements are equivalent:
\begin{enumerate}
\item $S \to T$ is flat.
\item For each prime ideal $ {\bf w}$ of $T$,
we have  $\text{ht}(\w)\ge \text{ht}(\w\cap S)$.
\item For each prime ideal $\w$ of $T$ such that $\w$ is
minimal over $\n T$,  we have
$\hgt(\w)\ge\hgt(\n)$.
\end{enumerate}
\end{theorem}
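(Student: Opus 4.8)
The plan is to prove the implications in the cycle $(1)\Rightarrow(2)\Rightarrow(3)\Rightarrow(1)$, isolating the real content in the last step. The first two implications are formal. For $(1)\Rightarrow(2)$: a flat local homomorphism satisfies the going-down property, so for any $\w\in\Spec T$ with contraction $\q=\w\cap S$, lifting a saturated descending chain below $\q$ produces a chain of the same length below $\w$ with strictly decreasing contractions; hence $\hgt\w\ge\hgt\q=\hgt(\w\cap S)$. For $(2)\Rightarrow(3)$: if $\w$ is minimal over $\n T$ then $\w\supseteq\n T$, so $\w\cap S$ is a prime of $S$ containing the maximal ideal $\n$, forcing $\w\cap S=\n$; applying $(2)$ gives $\hgt\w\ge\hgt\n$.

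The substantive direction is $(3)\Rightarrow(1)$, and my strategy is to transport the question to the closed fibers over $R$. Write $\overline S=S/\m S$ and $\overline T=T/\m T$. Because both $R\to S$ and $R\to T$ are flat (hypotheses (ii) and (i)), the local form of the fiber-by-fiber criterion for flatness shows that $S\to T$ is flat if and only if the induced local map $\overline S\to\overline T$ is flat. Now the structural hypotheses come into play: $\overline S$ is regular (ii) and $\overline T$ is Cohen--Macaulay (i). By the miracle-flatness theorem, a local homomorphism from a regular local ring to a Cohen--Macaulay local ring is flat precisely when the dimension formula holds, so $\overline S\to\overline T$ is flat if and only if $\dim\overline T=\dim\overline S+\dim(\overline T/\overline\n\,\overline T)$, where $\overline\n$ is the maximal ideal of $\overline S$ and $\overline T/\overline\n\,\overline T=T/\n T$ since $\m T\subseteq\n T$.

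It remains to verify this dimension equality using condition $(3)$. Invoking the equalities $\dim\overline S=\dim S-\dim R$ and $\dim\overline T=\dim T-\dim R$ coming from flatness of $R\to S$ and $R\to T$, the target equation simplifies to $\dim T=\dim S+\dim(T/\n T)$. The inequality $\dim T\le\dim S+\dim(T/\n T)$ holds for every local homomorphism of Noetherian local rings. For the reverse inequality I would choose a prime $\w$ minimal over $\n T$ realizing the fiber dimension, i.e.\ with $\dim(T/\w)=\dim(T/\n T)$, which exists because the latter is the maximum of $\dim(T/\w)$ over the minimal primes $\w$ of $\n T$. Then, using $\hgt\w+\dim(T/\w)\le\dim T$ together with $\hgt\w\ge\hgt\n=\dim S$ supplied by $(3)$, I obtain $\dim T\ge\hgt\w+\dim(T/\w)\ge\dim S+\dim(T/\n T)$, giving the required equality and hence flatness of $\overline S\to\overline T$, and so of $S\to T$.

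I expect the main obstacle to be the combinatorics of $(3)\Rightarrow(1)$, specifically recognizing that the two asymmetric hypotheses---a regular fiber on the $S$-side and a Cohen--Macaulay fiber on the $T$-side---are exactly what is needed to upgrade a mere dimension equality into genuine flatness through miracle flatness, and that condition $(3)$, a priori only about primes minimal over $\n T$, suffices to force the reverse dimension inequality after the problem has been pushed down to the closed fibers over $R$. The two easy implications serve mainly to confirm that conditions $(2)$ and $(3)$ are faithful reformulations of the going-down behavior that flatness guarantees.
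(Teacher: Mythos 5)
Your proof is correct, but there is nothing in the paper to compare it against: the paper states this theorem only with the citation \cite[Theorem~11.3]{powerbook} and uses it as a black box, so your argument must be judged on its own merits, and on those it stands. The implications $(1)\Rightarrow(2)$ (going-down for the flat local map $S\to T$) and $(2)\Rightarrow(3)$ (a prime minimal over $\n T$ contracts to a prime of $S$ containing, hence equal to, $\n$) are routine, as you say. In the substantive direction $(3)\Rightarrow(1)$, every tool you invoke applies under exactly the stated hypotheses: the local fiberwise flatness criterion is legitimate because the module in question is $T$ itself, finitely generated over the Noetherian local ring $T$, and because hypothesis (i) supplies the needed $R$-flatness of $T$ (in fact only (i) is needed for the equivalence ``$S\to T$ flat $\iff$ $S/\m S\to T/\m T$ flat''; the flatness in (ii) enters only later, through the dimension formula for $S$); miracle flatness applies to $S/\m S \to T/\m T$ because (ii) makes the source regular and (i) makes the target Cohen--Macaulay; the identification of the fiber $(T/\m T)\big/\overline{\n}(T/\m T) = T/\n T$ holds since $\m T \subseteq \n T$; and the dimension bookkeeping is sound, since $\dim S=\dim R+\dim (S/\m S)$ and $\dim T=\dim R+\dim (T/\m T)$ follow from flatness, $\dim T \le \dim S + \dim(T/\n T)$ holds for an arbitrary local homomorphism of Noetherian local rings, and the reverse inequality follows from $(3)$ by choosing a minimal prime $\w$ of $\n T$ with $\dim(T/\w)=\dim(T/\n T)$ and combining the universally valid bound $\hgt \w + \dim(T/\w) \le \dim T$ with $\hgt\w \ge \hgt\n = \dim S$. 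This reduction to the closed fiber, followed by miracle flatness and a dimension count, is the canonical proof of criteria of this type and is in all likelihood the route taken in the cited source as well; in any case your write-up is complete and could serve as a self-contained proof of the statement.
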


Since flatness is a local property, the
following  corollary  is immediate.

\begin{corollary} \label{16.2.3p}   Let   $R$, $S$ and $T$  be  Noetherian rings, and assume
there exist ring homomorphisms $R \to S \to T$. If the map  $R \to T$ is flat with
Cohen-Macaulay fibers  and the map  $R \to S$ is flat with regular fibers, then
the following two statements are equivalent:
\begin{enumerate}
\item The map  $S \to T$  is flat,
\item For each prime ideal $P$ of $T$,
we have  $\hgt(P)\ge \hgt(P\cap S)$.
\end{enumerate}
\end{corollary}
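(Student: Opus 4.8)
The plan is to deduce the corollary from Theorem~\ref{13.2.0p} by localizing at each prime of $T$, exploiting that flatness of a ring map can be checked at the primes of the target. First I would fix a prime $P \in \Spec T$ and set $\q := P \cap S$ and $\p := P \cap R$, so that localization yields local homomorphisms of Noetherian local rings (localizations of Noetherian rings being Noetherian)
\[
R_\p \longrightarrow S_\q \longrightarrow T_P .
\]
The first task is to check that these inherit the hypotheses of Theorem~\ref{13.2.0p}. Flatness is preserved under localization, so $R_\p \to T_P$ and $R_\p \to S_\q$ remain flat. For the fiber conditions I would observe that the closed fiber $T_P \otimes_{R_\p} \kappa(\p) = T_P/\p T_P$ is a localization of the $\p$-fiber $T \otimes_R \kappa(\p)$, namely its localization at the image of $P$ (this uses $\p = P \cap R$, so that $P$ lies over $\p$). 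Since a localization of a Cohen--Macaulay ring is Cohen--Macaulay, the closed fiber of $R_\p \to T_P$ is Cohen--Macaulay; the identical argument with ``regular'' in place of ``Cohen--Macaulay'' shows the closed fiber of $R_\p \to S_\q$ is regular. Thus Theorem~\ref{13.2.0p} applies to the chain $R_\p \to S_\q \to T_P$.

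Next I would translate the height condition across the localization. The primes of $T_P$ are exactly the $\w T_P$ for $\w \in \Spec T$ with $\w \subseteq P$, and height is preserved: $\hgt_{T_P}(\w T_P) = \hgt_T(\w)$. Since $\w \subseteq P$ forces $\w \cap S \subseteq \q$, the contraction of $\w T_P$ to $S_\q$ is $(\w \cap S)S_\q$, again with $\hgt_{S_\q}((\w \cap S)S_\q) = \hgt_S(\w \cap S)$. Hence condition (2) of Theorem~\ref{13.2.0p} for the localized maps reads: for every $\w \in \Spec T$ with $\w \subseteq P$, one has $\hgt_T(\w) \ge \hgt_S(\w \cap S)$. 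By the theorem this is equivalent to flatness of $S_\q \to T_P$.

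It then remains to assemble these local equivalences into the global statement. By the local criterion for flatness, $S \to T$ is flat if and only if $S_{P \cap S} \to T_P$ is flat for every $P \in \Spec T$; combining this with the previous paragraph, $S \to T$ is flat if and only if for every $P \in \Spec T$ and every $\w \subseteq P$ we have $\hgt_T(\w) \ge \hgt_S(\w \cap S)$. For the forward implication I would specialize to $\w = P$ to recover condition (2) of the corollary; for the reverse, condition (2) applied to each prime $\w$ (now playing the role of $P$) supplies exactly the inequalities needed for all $\w \subseteq P$. The main point requiring care --- though it is routine rather than deep --- is the fiber bookkeeping in the first step: verifying that passing to the localized map replaces each fiber by one of its localizations, so that the Cohen--Macaulay and regular hypotheses survive. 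Everything else is the standard dictionary between a global flatness/height statement and its fibered local counterparts, which is why the result is essentially immediate from Theorem~\ref{13.2.0p}.
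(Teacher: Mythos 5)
Your proof is correct and is essentially the paper's own argument: the paper justifies the corollary with the single remark that ``flatness is a local property,'' and your localization of the chain $R \to S \to T$ at each $P \in \Spec T$ --- checking that flatness and the Cohen--Macaulay/regular fiber hypotheses pass to $R_\p \to S_\q \to T_P$, translating the height condition, and reassembling via the local criterion for flatness --- is exactly the argument that remark compresses. No gaps; nothing further is needed.
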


To establish  the existence of primarily  limit-intersecting
elements, we use the following prime avoidance lemma;   see the
articles \cite{Bu}, \cite{SV}, \cite{WW}  and the book
\cite[Lemma~14.2]{LW} for other prime avoidance results  involving
countably infinitely many prime ideals.

\begin{lemma} \label{6.3.7}   \cite[Lemma~22.10]{powerbook}  Let $(T,\n)$ be a
Noetherian local domain that is complete in the $(y)$-adic topology,  where $y$ is a nonzero
element of $\n$.   Let $\mathcal U$ be a countable set of  prime ideals of $T$ such
that $y \not\in P$ for each $P \in \mathcal U$, and fix an arbitrary element
 $t \in \n \setminus \n^2$.      Then there exists
 an element $a\in y^2T$ such that $t-a  ~\not\in  ~\bigcup
\{P : P \in \mathcal U \}$.
 \end{lemma}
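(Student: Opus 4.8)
The plan is to build the desired element $a$ as the sum of a rapidly $(y)$-adically convergent series $a = \sum_{m\ge 1}\delta_m$, handling one prime of $\mathcal U$ at each stage while taking care that correcting for a new prime never destroys the avoidances already arranged. First I would reduce to a convenient enumeration. Since $T$ is Noetherian it satisfies the ascending chain condition, so every $P \in \mathcal U$ lies in a maximal element of $\mathcal U$ under inclusion, and if $t-a\notin Q$ for a prime $Q\supseteq P$ then automatically $t-a\notin P$. Hence it suffices to avoid the maximal elements of $\mathcal U$; replacing $\mathcal U$ by this subset, I may assume its members are pairwise incomparable and enumerate them as $Q_1,Q_2,\dots$ (a finite antichain is handled the same way, the series below then terminating).

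Next I would construct the corrections inductively. Writing $a^{(m)} = \delta_1+\cdots+\delta_m$, at stage $m$ I would choose an element
\[
\delta_m \in (Q_1\cap\cdots\cap Q_{m-1})\cap y^{m+1}T \quad\text{with}\quad t-a^{(m)}\notin Q_m.
\]
The two membership requirements serve different purposes: lying in $y^{m+1}T$ forces $(y)$-adic convergence of the series and keeps $a\in y^2T$, while lying in $Q_1\cap\cdots\cap Q_{m-1}$ freezes the residues of $t-a^{(m)}$ modulo the earlier primes. Existence of such a $\delta_m$ is where the antichain reduction pays off: because $Q_m$ contains none of $Q_1,\dots,Q_{m-1}$ and does not contain $y$, primeness gives $(Q_1\cap\cdots\cap Q_{m-1})\cap y^{m+1}T\not\subseteq Q_m$, so the image of this ideal in the domain $T/Q_m$ is a nonzero ideal. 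Since $y\notin Q_m$ forces $Q_m\ne\n$ and hence $T/Q_m$ infinite, that image cannot be a single residue class, and I can select $\delta_m$ whose class differs from that of $t-a^{(m-1)}$, giving $t-a^{(m)}\notin Q_m$.

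Finally I would pass to the limit $a := \sum_{m\ge 1}\delta_m$, which exists and lies in $y^2T$ by completeness, and verify $t-a\notin Q_i$ for every $i$. The key point here, and the step I expect to be the main obstacle, is that controlling the finite partial sums does not by itself control the limit: a priori $t-a$ could slip into $Q_i$ even though every $t-a^{(m)}$ avoids it. I would resolve this by showing each $Q_i$ is closed in the $(y)$-adic topology, using the Krull intersection theorem in the Noetherian local domain $T/Q_i$ (where the image of $y$ is a nonzero element of the maximal ideal) to get $\bigcap_k(Q_i+y^kT)=Q_i$. Since $\delta_m\in Q_i$ for all $m>i$, the tail $a-a^{(i)}$ is a $(y)$-adic limit of elements of $Q_i$, hence lies in the closed ideal $Q_i$; therefore $t-a\equiv t-a^{(i)}\pmod{Q_i}$ and $t-a\notin Q_i$. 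Combined with the reduction of the first paragraph this yields $t-a\notin\bigcup\{P:P\in\mathcal U\}$, and since $a\in y^2T\subseteq\n^2$ the hypothesis $t\in\n\setminus\n^2$ persists to $t-a$, which is the form in which the lemma is applied.
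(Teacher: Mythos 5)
Your proof is correct and takes essentially the same approach as the paper's: both reduce to an antichain of incomparable primes, inductively build a $(y)$-adically convergent series of corrections that lie in all earlier primes (the paper uses products $\prod_{i<n}P_i$ where you use intersections) and in increasing powers of $y$, arrange avoidance of the current prime at each stage, and then combine completeness with the $(y)$-adic closedness of ideals in a Noetherian local ring to conclude that the limit $t-a$ still avoids every prime. The only cosmetic differences are your residue-class counting argument in place of the paper's two-case choice of the correction $b_{n+1}$, and your explicit Krull-intersection proof of closedness where the paper cites it as a standard fact.
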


\begin{proof}    We may assume there are no inclusion relations among the $P \in\mathcal U$.   We enumerate the prime ideals in
$\mathcal U$ as   $\{P_i\}_{i=1}^\infty$.       We choose $b_2 \in T$
so that $t - b_2y \not\in P_1$ as follows:  (i) if $t \in P_1$, let
$b_2  = 1$. Since $y \not\in P_1$,  we have $t - y^2 \not\in P_1$.
(ii) if $t \not\in P_1$,  let $b_2$ be a nonzero element of $P_1$.
Then $t - b_2y^2 \not\in P_1$.   Assume by induction that we have
found $b_2, \ldots, b_n$ in $T$ such that
$$
t - cy^2 ~:= ~
t - b_2y^2 - \cdots - b_ny^n ~ \not\in ~  P_1 \cup \cdots \cup P_{n-1}.
$$
We choose $b_{n+1} \in T$ so that $t - cy^2   -  b_{n+1}y^{n+1} \not\in \bigcup_{i=1}^n P_i$
as follows:
(i)  if $t - cy^2 \in P_n$, let $b_{n+1}  ~\in  ~(\prod_{i=1}^{n-1}P_i ) ~\setminus  ~P_n$.
(ii) if $t - cy^2 \not\in P_n$,
let $b_{n+1}$ be any nonzero element in $\prod_{i=1}^n P_i$.
Hence in either case there exists $b_{n+1} \in T$
so that
$$
t - b_2y^2 - \cdots - b_{n+1}y^{n+1} ~ \not\in P_1 \cup \cdots \cup P_n.
$$
Since $T$ is complete in the $(y)$-adic topology, the Cauchy sequence
$$
\{b_2y^2 + \cdots + b_ny^n \}_{n=2}^\infty
$$
has a limit $a \in \n^2$.   Since $T$ is Noetherian and local, every ideal of $T$ is
closed in the $(y)$-adic topology. Hence,
  for each integer $n \ge 2$, we have
$$
t - a~ =  ~(t-b_2y^2 - \cdots - b_ny^n) ~-~ (b_{n+1}y^{n+1}   + \cdots ),
$$
where $t -b_2y^2 - \cdots - b_ny^n    \not\in P_{n-1}$ and $(b_{n+1}y^{n+1} + \cdots )
\in  P_{n-1}$.
We conclude that  $t-a \not\in \bigcup_{i=1}^\infty P_i$.
\end{proof}

\subheading{The existence of one primarily limit-intersecting element.}

We use the following setting to describe necessary and sufficient conditions
for an element to be primarily limit-intersecting.

\begin{setting} \label{excset} Let $(R,\m)$  be a $d$-dimensional  excellent
normal local domain with $d \ge 2$,
let $y$ be a nonzero element of $\m$ and let $R^*$ denote the $(y)$-adic completion
of $R$.
Let $t$ be a variable over $R$,  let
$S:=R[t]_{(\m,t)}$, and let $S^*$ denote the $I$-adic completion of $S$,
where $I := (y,t)S$.  Then $S^* = R^*[[t]]$ is a $(d+1)$-dimensional normal Noetherian local
domain with maximal ideal $\n^* := (\m, t)S^*$. For each element $a \in y^2S^*$, we
have $S^* = R^*[[t]] = R^*[[t-a]]$ since $R^*$ is complete in the $(a)$-adic
topology. Let $\lambda_a: S^* \to R^*$ denote
the canonical homomorphism $S^* \to S^*/(t-a)S^* = R^*$, and let $\tau_a = \lambda_a(t)
= \lambda_a(a)$.
Consider the set
$$
\mathcal U ~:= ~\{P^* \in \Spec S^*~|~\hgt(P^*\cap S)= \hgt P^*,\text{ and } y\notin P^* ~\}.
$$
Since $S \hookrightarrow S^*$ is flat and thus satisfies the
going-down property, the set $\mathcal U$ can also be described as
the set of all $P^* \in \Spec S^*$ such that $y \notin P^*$ and
$P^*$ is minimal over $PS^*$ for some $P \in \Spec S$; see
\cite[Theorem~15.1]{M}
\end{setting}

\begin{theorem}  \label{iff} With the notation of Setting~\ref{excset},
the element $\tau_a$ is primarily limit-intersecting in $y$ over $R$ if and
only if $t-a \notin \bigcup\{P^* ~|~ P^* \in \mathcal U \}$.
\end{theorem}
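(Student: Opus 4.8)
The plan is to reduce the flatness of $\varphi\colon R[\tau_a]\hookrightarrow R^*[1/y]$ --- which is the content of ``$\tau_a$ is primarily limit-intersecting,'' by Definition~\ref{prlidef} --- to the height criterion of Corollary~\ref{16.2.3p}, and then to rewrite that criterion using the presentation $R^*=S^*/(t-a)S^*$ furnished by $\lambda_a$. The prime $(t-a)S^*$ has height one, since $S^*/(t-a)S^*=R^*$ is a domain of dimension $d=\dim S^*-1$, and it avoids $y$ because $\lambda_a$ fixes $R^*$, so $\lambda_a(y)=y\ne 0$. For $f\in S$ one has $f\in(t-a)S^*$ iff $f(\tau_a)=\lambda_a(f)=0$, so $(t-a)S^*\cap S=0$ exactly when $\tau_a$ is algebraically independent over $R$, equivalently (as $(t-a)S^*$ has height one) exactly when $(t-a)S^*\notin\mathcal U$. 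Since $t-a\in(t-a)S^*$, in the dependent case $(t-a)S^*\in\mathcal U$, so $t-a\in\bigcup\mathcal U$ while $\tau_a$ is not primarily limit-intersecting; the asserted equivalence holds trivially there, and I may henceforth assume $\tau_a$ is algebraically independent, so that $R[t]\cong R[\tau_a]$ via $t\mapsto\tau_a$.

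I would apply Corollary~\ref{16.2.3p} to the triple $R\to R[\tau_a]\to R^*[1/y]$. The map $R\to R[\tau_a]\cong R[t]$ is flat with regular fibers $\kappa(P)[t]$. For $R\to R^*[1/y]$ I would use excellence of $R$: the completion $R\to\widehat R$ is regular, $R^*\to\widehat R$ is faithfully flat (as $\widehat R$ is at once the $\m$-adic completion of $R$ and the $\m R^*$-adic completion of the Noetherian ring $R^*$), and geometric regularity of fibers descends along faithfully flat maps; hence $R\to R^*$, and then $R\to R^*[1/y]$, is flat with geometrically regular, in particular Cohen--Macaulay, fibers. Corollary~\ref{16.2.3p} then gives that $\varphi$ is flat if and only if $\hgt Q\ge\hgt\bigl(Q\cap R[\tau_a]\bigr)$ for every $Q\in\Spec R^*[1/y]$.

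It remains to translate this criterion through $\lambda_a$. Primes $Q$ of $R^*[1/y]$ correspond to primes $Q^*$ of $S^*$ with $(t-a)S^*\subseteq Q^*$ and $y\notin Q^*$, via $\bar Q=Q\cap R^*$ and $Q^*=\lambda_a^{-1}(\bar Q)$. Inverting $y$ does not change heights of primes avoiding $y$, so $\hgt Q=\hgt\bar Q$, and since $S^*$ is catenary and $(t-a)S^*$ has height one, $\hgt\bar Q=\hgt Q^*-1$. On the other hand $Q\cap R[\tau_a]=\bar Q\cap R[\tau_a]$ corresponds under $R[t]\cong R[\tau_a]$ to $Q^*\cap R[t]$, and as $Q^*\subseteq\n^*=(\m,t)S^*$ this contraction lies in $(\m,t)R[t]$, whose localization gives $S$; hence $\hgt(Q\cap R[\tau_a])=\hgt(Q^*\cap S)$. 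So the criterion reads $\hgt Q^*-1\ge\hgt(Q^*\cap S)$ for every $Q^*\supseteq(t-a)S^*$ with $y\notin Q^*$. Because $S\hookrightarrow S^*$ is flat, going-down gives $\hgt(Q^*\cap S)\le\hgt Q^*$ always, so this inequality fails for such a $Q^*$ exactly when $\hgt(Q^*\cap S)=\hgt Q^*$, i.e. exactly when $Q^*\in\mathcal U$. Therefore $\varphi$ is flat iff no member of $\mathcal U$ contains $t-a$, that is, iff $t-a\notin\bigcup\{P^*:P^*\in\mathcal U\}$.

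The step I expect to demand the most care is the verification of the fiber hypotheses of Corollary~\ref{16.2.3p}, for that is where excellence of $R$ is essential: one must know $R\to R^*$ has geometrically regular fibers, which I would obtain by descending regularity of the formal fibers of $R$ along the faithfully flat map $R^*\to\widehat R$. The remaining ingredients --- the catenary height identities and the going-down bound --- are routine in the setting of Setting~\ref{excset}.
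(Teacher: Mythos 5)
Your proposal is correct and takes essentially the same approach as the paper's proof: both verify the fiber hypotheses of Corollary~\ref{16.2.3p} using excellence of $R$, reduce flatness of $\varphi$ to the height inequality $\hgt Q \ge \hgt(Q \cap R[\tau_a])$ for primes $Q$ of $R^*[1/y]$, and translate that inequality through $\lambda_a$ via the correspondence between primes of $R^*$ and primes of $S^* = R^*[[t-a]]$ containing $t-a$, together with going-down for the flat map $S \hookrightarrow S^*$. The differences are organizational rather than mathematical---you dispose of the algebraically dependent case up front and run both implications through a single biconditional, where the paper proves the forward direction by a separate going-down contradiction and the converse via its Claim~\ref{htler}; note also that your appeal to catenarity of $S^*$ can be avoided, since Krull's height theorem applied to $S^* = R^*[[t-a]]$ already gives the identity $\hgt Q^* = \hgt(Q^* \cap R^*) + 1$ that you need.
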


\begin{proof}  Consider the commutative diagram:
$$\CD
\phantom{R} @.  S=R[t]_{(\m,t)}
@>{\subseteq}>> {S}^* = R^*[[t]] @>{\subseteq}>>S^*[1/y]\\
@. @V{\lambda_0}VV  @V{\lambda_a}VV @. {}\\
R @>{\subseteq}>> R_1=R[\tau_a]_{(\m,\tau_a)}
@>>> R^* @>{\subseteq}>>R^*[1/y].\endCD $$
\medskip

\centerline{Diagram~\ref{iff}.0}

\medskip
\noindent
The map  $\lambda_0$ denotes the restriction of $\lambda_a$ to $S$.

Assume that $\tau_a$ is primarily limit-intersecting in $y$ over $R$. Then $\tau_a$ is
algebraically independent over $R$ and $\lambda_0$ is an isomorphism.
If $t-a \in P^*$ for some $P^* \in \mathcal U$, we prove that $\varphi: R_1 \to R^*[1/y]$
is not flat.  Let $Q^* := \lambda_a(P^*)$.  We have $\hgt Q^* = \hgt P^* - 1$, and
$y \notin P^*$ implies $y \notin Q^*$. Let $P := P^* \cap S$ and $Q := Q^* \cap R_1$.
Commutativity of Diagram~\ref{iff}.0   and $\lambda_0$ an isomorphism imply
that $\hgt P = \hgt Q$.
Since $P^* \in \mathcal U$, we have $\hgt P = \hgt P^*$. It follows that
$\hgt Q > \hgt Q^*$. This implies that $\varphi: R_1 \to R^*[1/y]$ is not flat.

For the converse, assume that   $t-a\notin \bigcup\{ P^*~|~ P^*\in\mathcal U\}$.
Since $a \in y^2S^*$ and $S^*$ is complete in the $(y,t)$-adic topology, we have
$S^* = R^*[[t]] = R^*[[t-a]]$. Thus
$$
\mathfrak p~:= ~\ker(\lambda_a)~=~ (t- \tau_a){S}^* ~=~ (t-a)S^*
$$
is a height-one prime ideal of $S^*$.  Since $y \in R$ and $\mathfrak p \cap R = (0)$,
we have $y \notin \mathfrak p$.

Since $t-a$ is outside every element of $\mathcal U$, we have $\mathfrak p\notin \mathcal U$.
Since $\mathfrak p$ does not fit
the condition of $\mathcal U$, we have
$\hgt(\mathfrak p\cap S)\ne \hgt\mathfrak p=1$, and so, by the
faithful flatness of $S\hookrightarrow S^*$,
$\mathfrak p \cap S = (0)$.  Therefore the map
$\lambda_0 : S \to R_1$ has trivial kernel, and so $\lambda_0$
is an isomorphism.  Thus $\tau_a$ is algebraically independent over $R$.

Since $R$ is excellent and $R_1$ is a localized polynomial ring over $R$, the
hypotheses of Corollary~\ref{16.2.3p} are satisfied for the extension $R\hookrightarrow R_1\hookrightarrow R^*[1/y]$. It follows
that  the element  $\tau_a$ is primarily
limit-intersecting in $y$ over $R$
if $\hgt(Q_1^*\cap R_1)\le\hgt Q_1^*$
 for every prime ideal $Q_1^*\in \Spec(R^*[1/y])$,  or, equivalently, if, for every
 $Q^* \in \Spec R^*$  with $y \notin Q^*$, we  have
$\hgt(Q^* \cap R_1) \le \hgt Q^*$.
Thus, to
complete the proof of Theorem~\ref{iff}, it suffices to prove Claim~\ref{htler}.

\medskip

\begin{claim} \label{htler} For every prime
 ideal $Q^*\in \Spec R^*$ with $y\notin
Q^*$, we have $$\hgt(Q^*\cap R_1)\le \hgt Q^*.$$
\end{claim}
\noindent {\it Proof of Claim~\ref{htler}.} Since $\dim R^*=d$ and $y\notin Q^*$,
we have $\hgt Q^* =r\le d-1$. Since  the map $R\hookrightarrow R^*$ is flat,
we have $\hgt (Q^*\cap R)\le \hgt
Q^*  =  r$.
Suppose that  $Q:=Q^*\cap R_1$ has height at least $r+1$ in $\Spec R_1$.
Since $R_1$ is a localized polynomial ring in one variable over $R$ and
$\hgt(Q\cap R)\le r$, we have
$\hgt(Q) =  r+1$.
Let  $P:=\lambda_0^{-1}(Q)\in\Spec S$.
Then $\hgt P = r+1$ and $y\notin P$.

Let $P^* := \lambda_a^{-1} (Q^*)$. Since the prime ideals of $S^*$ that
contain $t-a$ and have height $r+1$ are in one-to-one correspondence with the prime ideals
of $R^*$ of height $r$, we have $\hgt P^* = r+1$. By the commutativity of the diagram,
we also have  $y\notin P^*$ and $P\subseteq P^*\cap S$, and so
$$r+1=\hgt P\le \hgt(P^*\cap S)\le \hgt P^* = r+1,$$ where the last inequality
holds because  the map
$S \hookrightarrow S^*$
is flat. It follows that $P=P^*\cap S$, and so $P^*\in\mathcal U$. This contradicts the fact  that $t-a \notin P_1^*$ for each
 $P_1^*\in\mathcal U$.  Thus we have $\hgt(Q^*\cap R_1)\le r=\hgt Q^*$,
as asserted in  Claim~\ref{htler}.

This completes the proof of Theorem~\ref{iff}.
\end{proof}

Theorem~\ref{iff} yields a necessary and sufficient condition for
an element of $R^*$ that is algebraically independent over $R$ to be
primarily limit-intersecting in $y$ over $R$.

\begin{remarks}  \label{iffremark}
Assume notation as in Setting~\ref{excset}.
\begin{enumerate}
\item For each $a \in y^2S^*$ as in Setting~\ref{excset}, we have $(t-a)S^* =
(t - \tau_a)S^*$. Hence $t-a \notin \bigcup\{P^* ~|~ P^* \in \mathcal U \} ~
\iff ~ t-\tau_a \notin \bigcup\{P^* ~|~ P^* \in \mathcal U \}$.

\item If $a \in R^*$, then the commutativity of Diagram~\ref{iff}.0 implies that $\tau_a = a$.

\item   For $\tau \in R^*$, we have
$\tau = b_0 + b_1y + \tau'$, where $b_0$ and $b_1$ are in $R$ and $\tau' \in y^2R^*$.

\begin{enumerate}
\item  The rings  $R[\tau]$ and $R[\tau']$ are equal.  Hence
$\tau$ is primarily limit-intersecting in $y$ over $R$
if and only if $\tau'$ is  primarily limit-intersecting in $y$ over $R$.

\item  Assume  $\tau \in R^*$ is algebraically independent over $R$. Then
$\tau$ is primarily limit-intersecting in $y$ over $R$ if and only if
$t-\tau' \notin \bigcup\{P^* ~|~ P^* \in \mathcal U \}$.
\end{enumerate}
Item 3b follows from Theorem~\ref{iff} by setting $a = \tau'$ and applying
item~3a and item~2.

\end{enumerate}

\end{remarks}

We use Theorem~\ref{iff} and Lemma~\ref{6.3.7} to prove
Theorem~\ref{exprimli}.

\begin{theorem}\label{exprimli} Let $(R, \m)$ be a countable excellent normal  local domain  of
dimension $ d \ge 2$,  let  $y$ be  a nonzero element in $\m$, and let    $R^*$ denote the
$(y)$-adic completion of $R$.   Then there exists
an  element $\tau  \in yR^*$ that  is primarily limit-intersecting in $y$ over $R$.
\end{theorem}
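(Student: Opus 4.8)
The plan is to combine the criterion of Theorem~\ref{iff} with the countable prime-avoidance Lemma~\ref{6.3.7}. Working in Setting~\ref{excset} with $S^* = R^*[[t]]$, Theorem~\ref{iff} tells us that it suffices to produce an element $a \in y^2 S^*$ with $t - a \notin \bigcup\{P^* : P^* \in \mathcal U\}$; then $\tau := \tau_a = \lambda_a(a)$ is primarily limit-intersecting in $y$ over $R$. Moreover, since $a \in y^2 S^*$ maps into $y^2 R^* \subseteq yR^*$ under $\lambda_a$, we get $\tau \in yR^*$ as required. Thus the whole theorem reduces to the existence of such an $a$, which is precisely the conclusion of Lemma~\ref{6.3.7} applied to the ring $S^*$, the element $y$, the element $t$, and the set $\mathcal U$.

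First I would verify that the hypotheses of Lemma~\ref{6.3.7} hold for $(T,\n) = (S^*, \n^*)$. The ring $S^*$ is a Noetherian local domain by Setting~\ref{excset}, and it is complete in the $(y)$-adic topology: since $R^*$ is $(y)$-adically complete, the identifications $R^*[[t]]/y^n R^*[[t]] = (R^*/y^n R^*)[[t]]$ yield $\varprojlim_n (R^*/y^n R^*)[[t]] = R^*[[t]] = S^*$. Next, $t \in \n^* \setminus (\n^*)^2$: writing elements of $S^*$ as power series $\sum_k c_k t^k$ with $c_k \in R^*$, one computes that $(\n^*)^2 = \{\sum_k c_k t^k : c_0 \in \m^2 R^*,\ c_1 \in \m R^*\}$, so the unit coefficient of $t$ in $t$ itself forces $t \notin (\n^*)^2$. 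Finally, every $P^* \in \mathcal U$ satisfies $y \notin P^*$ by the very definition of $\mathcal U$.

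The main obstacle is to show that $\mathcal U$ is countable, which is exactly what makes Lemma~\ref{6.3.7} applicable. Here I would use the second description of $\mathcal U$ recorded in Setting~\ref{excset}: $P^* \in \mathcal U$ if and only if $y \notin P^*$ and $P^*$ is minimal over $PS^*$ for some $P \in \Spec S$. Because $R$ is countable, so is the Noetherian ring $S = R[t]_{(\m,t)}$; and a countable Noetherian ring has only countably many prime ideals, since each prime is finitely generated and there are only countably many finitely generated ideals. Hence $\Spec S$ is countable. For each $P \in \Spec S$ there are only finitely many primes of $S^*$ minimal over $PS^*$, so $\mathcal U$ is a countable union of finite sets, hence countable. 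With countability of $\mathcal U$ in hand, Lemma~\ref{6.3.7} produces the desired $a \in y^2 S^*$, and Theorem~\ref{iff} then completes the proof. I expect the completeness and cotangent-space checks in the second step to be routine, with the countability of $\mathcal U$ being the one genuinely essential input --- and indeed the reason the countability hypothesis on $R$ cannot be dropped by this method.
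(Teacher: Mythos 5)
Your proof is correct and takes essentially the same route as the paper's: the paper likewise applies Lemma~\ref{6.3.7} to the set $\mathcal U$ of Setting~\ref{excset}, noting that $\mathcal U$ is countable because $S$ is countable and Noetherian, to obtain $a \in y^2S^*$ with $t-a \notin \bigcup\{P^* : P^* \in \mathcal U\}$, and then invokes Theorem~\ref{iff} to conclude that $\tau_a$ is primarily limit-intersecting. The additional verifications you supply (the $(y)$-adic completeness of $S^*$, the check that $t \notin (\n^*)^2$, the counting argument for $\mathcal U$, and the observation that $\tau_a \in yR^*$) are details the paper leaves implicit, and all of them are sound.
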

\begin{proof} As in Setting~\ref{excset}, let
$$
\mathcal U ~:= ~\{P^*\in \Spec S^*~|~\hgt(P^*\cap S)= \hgt P^*,~\text{ and } y\notin P^* ~\}.
$$
Since the ring $S$ is countable and Noetherian, the set $\mathcal U$ is countable.
 Lemma~\ref{6.3.7} implies that   there exists
an element $a\in y^2{S}^*$ such that $t-a\notin \bigcup\{ P^*~|~ P^*\in\mathcal U\}$.
By Theorem~\ref{iff}, the element $\tau_a$ is
primarily limit-intersecting in $y$ over $R$.
\end{proof}

\subheading{The existence of more  primarily limit-intersecting elements.}

To establish the existence of more than one primarily limit-intersecting element
we use the following setting.

\begin{setting} \label{excset2} Let $(R,\m)$  be a $d$-dimensional  excellent
normal local domain,
let $y$ be a nonzero element of $\m$ and let $R^*$ denote the $(y)$-adic completion
of $R$. Let  $t_1, \ldots, t_{n+1}$ be
indeterminates over $R$, and let $S_{n}$ and $S_{n+1}$ denote the localized polynomial rings
$$
S_n~ := ~ R[t_1,\dots,t_n]_{(\m,t_1,\dots,t_n)} \quad  \text{and} \quad
S_{n+1}~ := ~ R[t_1,\dots,t_{n+1}]_{(\m,t_1,\dots,t_{n+1})}.
$$
 Let $S_n^*$ denote the $I_n$-adic
completion of $S_n$, where $I_n := (y, t_1, \ldots, t_n)S_n$. Then $S_n^* =
R^*[[t_1, \ldots, t_n]]$ is a $(d+n)$-dimensional normal Noetherian local domain
with maximal ideal $\n^* = (\m, t_1, \ldots, t_n)S_n^*$.
Assume that $\tau_1, \ldots, \tau_n \in yR^*$
are primarily limit-intersecting  in $y$ over $R$, and define
$\lambda: S_n^* \to R^*$ to be the $R^*$-algebra
homomorphism such that $\lambda(t_i) = \tau_i$, for $1 \le i \le n$.

Since $S_n^* = R^*[[t_1 - \tau_1, \ldots, t_n - \tau_n]]$,  we have
$\p_n :=  \ker \lambda = (t_1 - \tau_1, \ldots, t_n - \tau_n)S_n^*$.
Consider  the commutative diagram:
$$\CD
\phantom{R} @.  S_n=R[t_1, \ldots, t_n]_{(\m,t_1, \ldots, t_n)}
@>{\subseteq}>> S_n^* = R^*[[t_1, \ldots, t_n]] @>{\subseteq}>>S_n^*[1/y]\\
@. @V{\lambda_0, ~\cong}VV  @V{\lambda}VV @. {}\\
R @>{\subseteq}>> R_n=R[\tau_1, \ldots, \tau_n]_{(\m,\tau_1, \ldots, \tau_n)}
@>{\varphi_0}>> R^* @>{\alpha}>>R^*[1/y].\endCD
$$
\vskip 3pt

\noindent
Let $S_{n+1}^*$
denote the $I_{n+1}$-adic completion of $S_{n+1}$, where
$I_{n+1} := (y, t_1, \ldots, t_{n+1})S_{n+1}$.
For each element $a \in y^2S_{n+1}^*$, we
have 
\begin{equation*} 
S_{n+1}^* = S_n^*[[t_{n+1}]] = S_n^*[[t_{n+1}-a]].  \tag{\ref{excset2}.1}
\end{equation*} 
Let $\lambda_a: S_{n+1}^* \to R^*$ denote
the composition
$$ \CD
S_{n+1}^* ~=~ S_n^*[[t_{n+1}]] ~@>>> ~\frac{S_n^*[[t_{n+1}]]}{(t_{n+1} - a)}
~=~ S_n^*  @>{\lambda}>> R^*,
\endCD
$$
and let $\tau_a := \lambda_a(t_{n+1}) = \lambda_a(a)$. We have
$\ker \lambda_a = (\p_n, t_{n+1} - a)S_{n+1}^*$.  Consider the commutative diagram
$$\CD
\phantom{R} @.  S_n
@>{\subseteq}>> S_n^*  @>{\subseteq}>>S_{n+1}^*  @>>> S_{n+1}^*[1/y] \\
@. @V{\lambda_0, ~\cong}VV  @V{\lambda}VV @V {\lambda_a}VV @VVV\\
R @>{\subseteq}>> R_n
@>{\varphi_0}>> R^* @>{=}>>R^* @>>> R^*[1/y].\endCD
$$
\vskip 5 pt
\centerline{Diagram~\ref{excset2}.2}
\vskip 1 pt

\noindent
Let
$$
\mathcal U ~:= ~\{P^*\in \Spec S_{n+1}^*~|~ P^* \cap S_{n+1} = P, ~y\notin P ~\text{ and }
P^*\text{ is minimal over } (P,  \p_n)S_{n+1}^*  \}.
$$

\noindent
Notice that  $y \notin P^*$ for each $P^* \in \mathcal U$, since $y \in R$ implies $\lambda_a(y)=y$.
\end{setting}

\begin{theorem} \label{iff2} With the notation of Setting~\ref{excset2}, the elements
$\tau_1, \ldots, \tau_n, \tau_a$ are primarily limit-intersecting in $y$ over $R$ if
and only if $t_{n+1} - a \notin \bigcup\{P^* ~|~ P^* \in \mathcal U \}$.
\end{theorem}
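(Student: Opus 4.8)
The plan is to mirror the proof of Theorem~\ref{iff}, now treating $t_{n+1}$ as the single \emph{new} variable over the base $R_n = R[\tau_1,\dots,\tau_n]_{(\m,\tau_1,\dots,\tau_n)}$ while exploiting that $\tau_1,\dots,\tau_n$ are \emph{already} primarily limit-intersecting. As in the one-variable case, I would first reduce the assertion to a height inequality: exactly as in the proof of Theorem~\ref{iff}, since $R$ is excellent the hypotheses of Corollary~\ref{16.2.3p} hold for $R\hookrightarrow R_{n+1}\hookrightarrow R^*[1/y]$, where $R_{n+1}:=R[\tau_1,\dots,\tau_n,\tau_a]_{(\m,\tau_1,\dots,\tau_n,\tau_a)}$, so the flatness of $\varphi\colon R_{n+1}\hookrightarrow R^*[1/y]$ is equivalent to $\hgt(Q^*\cap R_{n+1})\le \hgt Q^*$ for every $Q^*\in\Spec R^*$ with $y\notin Q^*$. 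Throughout I would use the map $\lambda_a$ of Diagram~\ref{excset2}.2, the fact that $\ker\lambda_a=(\p_n,t_{n+1}-a)S_{n+1}^*$ is a height-$(n+1)$ prime with $S_{n+1}^*/\ker\lambda_a\cong R^*$, and the reduction $W:=S_{n+1}^*/\p_n S_{n+1}^*=R^*[[t_{n+1}]]$, under which the condition $P^*\in\mathcal U$ translates into: $\p_n\subseteq P^*$, $y\notin P:=P^*\cap S_{n+1}$, and the image $\overline{P^*}$ is minimal over $\overline P\,W$, where $\overline P$ is the image of $P$ in the subring $R_n[t_{n+1}]_{(\m,\tau_1,\dots,\tau_n,t_{n+1})}\subseteq W$.

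For the forward implication I would argue by contraposition. Given $P^*\in\mathcal U$ with $t_{n+1}-a\in P^*$, set $Q^*:=\lambda_a(P^*)$; then $y\notin Q^*$ and $\hgt Q^*=\hgt P^*-(n+1)$. Since $\tau_1,\dots,\tau_n$ are primarily limit-intersecting, $R_n\hookrightarrow R^*[1/y]$ is flat, so in $W$ the induced map $R_n[t_{n+1}]_{(\dots)}\to W[1/y]$ is flat; as $\overline{P^*}$ is minimal over $\overline P\,W$ and avoids $y$, height preservation of minimal primes under flat maps gives $\hgt\overline{P^*}=\hgt\overline P$, that is $\hgt P^*=\hgt P+n$. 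Because the restriction of $\lambda_a$ to $S_{n+1}$ is an isomorphism onto $R_{n+1}$ (algebraic independence holds by hypothesis), with $Q:=Q^*\cap R_{n+1}$ we get $\hgt Q=\hgt P=\hgt P^*-n=\hgt Q^*+1>\hgt Q^*$, violating the height inequality and hence contradicting flatness of $\varphi$. Thus $t_{n+1}-a\notin\bigcup\mathcal U$.

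For the converse I would assume $t_{n+1}-a\notin\bigcup\mathcal U$ and first prove algebraic independence, i.e.\ that $\p^*:=\ker\lambda_a$ meets $S_{n+1}$ only in $(0)$. In $W$ the image $\overline{\p^*}=(t_{n+1}-\overline a)W$ is a height-one prime, and a height-one prime of the domain $W$ is minimal over $\overline{P_0}\,W$ precisely when $\overline{P_0}\ne 0$; since $\p_n S_{n+1}^*\cap S_{n+1}=(0)$ by the algebraic independence of $\tau_1,\dots,\tau_n$, assuming $P_0:=\p^*\cap S_{n+1}\ne(0)$ would place $\p^*$ in $\mathcal U$ with $t_{n+1}-a\in\p^*$, contrary to hypothesis; hence $P_0=(0)$ and $\tau_a$ is algebraically independent over $R_n$. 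It then remains to verify the height inequality. Fixing $Q^*\in\Spec R^*$ with $y\notin Q^*$ and $r:=\hgt Q^*$, and supposing $Q:=Q^*\cap R_{n+1}$ has height $\ge r+1$, I would use going-down for the flat map $R_n\hookrightarrow R^*[1/y]$ to get $\hgt(Q^*\cap R_n)\le r$; since $R_{n+1}=R_n[\tau_a]_{(\dots)}$ is a one-variable extension of $R_n$, this pins $\hgt Q=r+1$. Pulling back through $\lambda_a$ gives $P=P^*\cap S_{n+1}$ of height $r+1$ and $P^*=\lambda_a^{-1}(Q^*)$ of height $r+n+1$, so $\hgt\overline P=r+1=\hgt\overline{P^*}$; as before, flatness off $y$ forces $\overline{P^*}$ to be minimal over $\overline P\,W$, whence $P^*\in\mathcal U$ with $t_{n+1}-a\in P^*$, a contradiction. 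Therefore $\hgt Q\le r$, the inequality holds, and $\tau_1,\dots,\tau_n,\tau_a$ are primarily limit-intersecting.

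The main obstacle is the presence of the $n$ extra generators in $\p_n$: Krull's height theorem alone only bounds $\hgt(Q^*\cap R_{n+1})$ by $\hgt Q^*+n$, so the entire force of the argument lies in improving this to $\hgt Q^*+1$ at worst. This is exactly where the hypothesis that $\tau_1,\dots,\tau_n$ are \emph{already} primarily limit-intersecting enters, and it enters twice: once through going-down for the flat map $R_n\hookrightarrow R^*[1/y]$, which collapses the problem to a single new variable, and once through the preservation of heights of minimal primes over $\overline P\,W$ in the flat locus away from $y$, which is what certifies that the offending prime $P^*$ genuinely lies in $\mathcal U$.
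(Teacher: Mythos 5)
Your proof is correct, and although it keeps the paper's outer skeleton (reduction to a height inequality via Corollary~\ref{16.2.3p}; contraposition for the forward direction; for the converse, algebraic independence first and then the height claim), it executes the decisive step by a genuinely different mechanism. To prove Claim~\ref{htler2}, the paper chooses an auxiliary polynomial $h(\tau_a)\in(Q^*\cap R_n[\tau_a])\setminus(Q^*\cap R_n)R_{n+1}$, pulls back the primes $P_0$, $P_0^*$, $P_1$, $P^*$ through Diagram~\ref{htler2}.0, and certifies $P^*\in\mathcal U$ by an explicit chain of ideal inclusions showing that $P^*$ is minimal over $(P_1,\p_n)S_{n+1}^*$. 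You instead kill $\p_n$ and work in $W:=S_{n+1}^*/\p_nS_{n+1}^*=R^*[[t_{n+1}]]$: the hypothesis that $\tau_1,\dots,\tau_n$ are primarily limit-intersecting enters exactly once, as flatness of $R_n[t_{n+1}]_{(\m,\tau_1,\dots,\tau_n,t_{n+1})}\to W[1/y]$, and the dimension formula for flat local maps (a minimal prime of $\overline{P}W[1/y]$ inside $\overline{P^*}W[1/y]$ contracts to $\overline{P}$ by going-down, hence has height $\hgt \overline{P}$) converts the height equality $\hgt\overline{P^*}=\hgt\overline{P}$ directly into minimality of $P^*$ over $(P,\p_n)S_{n+1}^*$, with no auxiliary polynomial; the same device gives the equality $\hgt P^*=\hgt P+n$ in the forward direction, where the paper settles for the inequality $\hgt P^*\le \hgt P+n$ from Krull's height theorem and catenarity. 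Your route is shorter and makes the induction transparent---it reduces the statement to the shape of Theorem~\ref{iff} over the base $W$---while the paper's route is lower-tech, certifying membership in $\mathcal U$ by explicit inclusions and Krull's theorem alone. If you write this up, make explicit the two hinges you use silently: the identities $\hgt\overline{P^*}=\hgt P^*-n$ and $\overline{P}=\overline{P^*}\cap R_n[t_{n+1}]_{(\m,\tau_1,\dots,\tau_n,t_{n+1})}$ rest respectively on catenarity of $S_{n+1}^*$ and on the containment $\p_n\subseteq P^*$, and the flatness of $R_n[t_{n+1}]_{(\m,\tau_1,\dots,\tau_n,t_{n+1})}\to W[1/y]$ needs the factorization through $R^*[t_{n+1}][1/y]$ together with flatness of the completion map $R^*[t_{n+1}]\to R^*[[t_{n+1}]]$.
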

\begin{proof}
Assume that $\tau_1, \ldots, \tau_n, \tau_a$  are primarily limit-intersecting in $y$
over $R$. Then $\tau_1, \ldots, \tau_n, \tau_a$ are algebraically independent over $R$.
Consider the following commutative diagram:
$$\CD
\phantom{R} @.  S_{n+1}=R[t_1, \ldots, t_{n+1}]_{(\m,t_1, \ldots, t_{n+1})}
@>{\subseteq}>> S_{n+1}^* = R^*[[t_1, \ldots, t_{n+1}]] \\
@. @V{\lambda_1} VV  @V{\lambda_a}VV \\
R @>{\subseteq}>> R_{n+1}=R[\tau_1, \ldots, \tau_a]_{(\m,\tau_1, \ldots, \tau_a)}
@>>> R^*.\endCD $$
\medskip

\centerline{Diagram~\ref{iff2}.0}
\vskip 3 pt
\noindent The map $\lambda_1$ is the restriction of $\lambda_a$ to $S_{n+1}$,
and is an isomorphism since $\tau_1, \ldots, \tau_n, \tau_a$ are
algebraically independent over $R$.

If $t_{n+1}-a \in P^*$ for some $P^* \in \mathcal U$,
we prove that $\varphi: R_{n+1} \to R^*[1/y]$
is not flat, a contradiction to our assumption that $\tau_1, \ldots, \tau_n, \tau_a$
are primarily limit-intersecting.  Since $P^* \in \mathcal U$, we have $\p_n \subset P^*$.
Then
$t_{n+1}-a \in P^*$ implies $\ker \lambda_a \subset P^*$. Let $\lambda_a(P^*) := Q^*$.
Then $\lambda_a^{-1}(Q^*) = P^*$ and $\hgt P^* = n+1 + \hgt Q^*$.
Since $P^* \in \mathcal U$, we have $y \notin P^*$. The
commutativity of Diagram~\ref{iff2}.0  implies that $y \notin Q^*$.
  Let $P := P^* \cap S_{n+1}$ and let $Q := Q^* \cap R_{n+1}$.
Commutativity of Diagram~\ref{iff2}.0   and $\lambda_0$ an isomorphism imply
that $\hgt P = \hgt Q$.  Since $P^*$ is a minimal prime of $(P, \p_n)S_{n+1}^*$,
$\p_n$ is $n$-generated, and $S_{n+1}^*$ is Noetherian and catenary,
we have $\hgt P^* \le \hgt P + n$. Hence $\hgt P \ge \hgt P^* - n$. Thus
$$
\hgt Q ~=~ \hgt P  ~\ge ~ \hgt P^* - n ~=~ \hgt Q^* + n+1 - n ~=~ \hgt Q^* + 1.
$$
The fact that
$\hgt Q > \hgt Q^*$ implies that the map $R_{n+1} \to R^*[1/y]$ is not flat.
This proves the forward direction.

For the converse,  we have  
\smallskip

\noindent{\bf Assumption~\ref{iff2}.1:} \quad $t_{n+1} - a~ \notin ~ \bigcup ~ \{~P^* ~|~ P^* \in \mathcal U ~\}.$

\smallskip

\noindent
Since $\lambda_a: S_{n+1}^* \to R^*$ is an extension of $\lambda:S_n^* \to R^*$ as in
Diagram~\ref{excset2}.2, we have $\ker \lambda_a \cap S_n = (0)$.
Let $\p := (t_{n+1} - a)S_{n+1}^*= (t_{n+1} - \tau_a)S_{n+1}^*$.
As in Equation~\ref{excset2}.1, we have
$$ 
S^*_{n+1} ~ = ~ R^*[[t_1, \ldots, t_{n+1}]] ~ = ~ R^*[[t_1 - \tau_1, \ldots, t_{n} - \tau_n, t_{n+1} - a]].
$$
Thus $P^* := (\p_n, \p)S^*_{n+1}$ is a prime ideal of height $n+1$ and 
$P^* \cap R^* = (0)$. It follows that $y \notin P^*$.  
We show that $P^* \cap S_{n+1} = (0)$.  Assume that 
$P = P^* \cap S_{n+1} \ne (0).$  Since $\hgt P* = n+1$, $P^*$ is
minimal over $(P, \p_n)S_{n+1}^*$, and so $P^* \in \mathcal U$, a contradiction 
to Assumption~\ref{iff2}.1. Therefore $P^* \cap S_{n+1} = (0)$.
It follows that 
$\p  \cap S_{n+1} = (0)$  since $\p \subset P^*$. 
Thus $\ker\lambda_1=(0)$, and so $\lambda_1$ in Diagram~\ref{iff2}.0 is an isomorphism.
Therefore $\tau_a$ is algebraically independent
over $R_n$.

Since $R$ is excellent and $R_{n+1}$ is a localized polynomial ring
in $n+1$ variables over $R$, the
hypotheses of Corollary~\ref{16.2.3p} are satisfied for the composition
$$R\hookrightarrow R_{n+1}\hookrightarrow R^*[1/y].$$
 It follows
that  the elements  $ \tau_1, \ldots, \tau_n, \tau_a$  are  primarily
limit-intersecting in $y$ over $R$    if, for every
 $Q^* \in \Spec R^*$  with $y \notin Q^*$, we  have
$\hgt(Q^* \cap R_{n+1}) \le \hgt Q^*$.
Thus, to
complete the proof of Theorem~\ref{iff2}  with $\tau_{n+1}=\tau_a$, it suffices to prove Claim~\ref{htler2}.

\medskip

\begin{claim} \label{htler2}
 Let  $Q^*\in \Spec R^*$ with $y \notin Q^*$ and $\hgt Q^*=r$.
Then  $$\hgt(Q^*\cap R_{n+1})\le~r.$$
\end{claim}

\noindent{\it Proof of Claim~\ref{htler2}.} Let $Q_1:=Q^*\cap R_{n+1}$ and let  $Q_0:=Q^*\cap R_{n}$.
 Suppose   $\hgt Q_1>r$.
Notice that $r<d$, since $d=\dim R^*$ and $y\notin Q^*$.

Since  $\tau_1,\dots, \tau_{n}$ are primarily limit-intersecting in $y$ over $R$, the extension
$$R_{n}:=R[\tau_1,\dots, \tau_{n}]_{(\m,\tau_1,\dots, \tau_{n})}\hookrightarrow  R^*[1/y]$$
from Diagram~\ref{excset2}.2
is flat. Thus $\hgt Q_0\le r$ and $ \hgt Q_0\le\hgt L^*$ for every prime ideal $L^*$ of $R^*$ with $Q_0R^*\subseteq L^*\subseteq Q^*$.
 Since $R_{n+1}$ is a localized
polynomial ring in the indeterminate $\tau_a$ over $R_n$, we have that  $\hgt Q_1\le
\hgt Q_0+1=r+1$. Thus  $\hgt Q_1=r+1$  and $\hgt Q_0=r$.
It follows that  $Q^*$ is a minimal prime of $Q_0R^*$.

Let $h(\tau_{a})\in R_n[\tau_a]=R_{n+1}$ be a  polynomial in the variable $\tau_a$ over the ring $R_n$ such that
$$
h(\tau_{a})\in (Q^*\cap R_n[\tau_{a}]) ~ \setminus  ~ (Q^*\cap R_n)R_{n+1}.
$$
It follows that
$Q_1$ is  a minimal prime of the ideal  $( Q_0,h(\tau_{a}))R_{n+1}$.

With notation from Diagram~\ref{excset2}.2, define
$$P_0~:=~\lambda_{0}^{-1}( Q_0)~\text{ and }~P_0^*~ := \lambda^{-1}(Q^*).$$
Since $\lambda_0$ is an isomorphism,  $P_0$ is a prime ideal of $S_n$ with  $\hgt P_0=r$.
Moreover,  we have the following:
\begin{enumerate}
\item $P_0^* \cap S_n = P_0$ ~(by commutativity in Diagram~\ref{excset2}.2),
\item $y \notin P_0^*$~ (by item 1),
\item $P_0^*$ is a minimal prime of $(P_0, \p_n)S_n^*$~ (since $S_n^*/\p_n = R^*$ in Diagram~\ref{excset2}.2, and $Q^*$
is a minimal prime of $Q_0R^*$),
\item $\hgt P_0^* = n + r$ ~(by the correspondence between prime ideals of $S_n^*$ containing $\p_n$ and prime ideals of $R^*$).  \end{enumerate}

Consider  the  commutative diagram below with the left and right ends identified:
\begin{equation*}\CD  S_{n+1}^*@<<< S_n^*@<<< S_n  @>>>  S_{n+1} @>{\theta}>> S_{n+1}^* \\
@V{\lambda_a}VV @V{\lambda}VV @V{\lambda_{0}, \cong}VV  @V{\lambda_1, \cong}VV  @V{\lambda_a}VV \\
R^* @<<< R^* @<<<  R_n @>>> R_{n+1}@>>> R^*,\endCD
\end{equation*}
\centerline{Diagram~\ref{htler2}.0}

 \vskip 3 pt
 \noindent
where $\lambda, \lambda_0$ and $\lambda_1$ are as in Diagrams~\ref{excset2}.2 and \ref{iff2}.0,
and so $\lambda_a$ restricted to $S_n^*$ is $\lambda$.
Let $h(t_{n+1})~=~\lambda_1^{-1}(h(\tau_a)) $ and set
$$
P_1~  :=  ~ \lambda_1^{-1}(Q_1) ~ \in  ~\Spec(S_{n+1}),\quad ~\text{ and }~P^* ~:= ~\lambda_a^{-1}(Q^*) \in \Spec(S_{n+1}^*).
$$
 Then $P_1$ is a minimal prime of $(P_0, h(t_{n+1}))S_{n+1}$,  since  $Q_1$ is a minimal prime of
$(Q_0, h(\tau_a))R_{n+1}$.
Since $Q_1\subseteq Q^*$,  we have $h(t_{n+1})\in P^*$ and $P_1S_{n+1}^*\subseteq  P^*$ because ~ $\lambda_a(h(t_{n+1}))=\lambda_1(h(t_{n+1}))=h(\tau_a)\in Q_1$ and $\lambda_a(P_1)=\lambda_1(P_1)=Q_1$.
 By the correspondence between prime ideals of
$S_{n+1}^*$ containing $\ker(\lambda_a)=\p_{n+1}$ and prime ideals of $R^*$, we see
$$\hgt P^*=\hgt Q^*+n+1=r+ n+1.$$
Since $\lambda_a(P_0^*)\subseteq Q^*$, we have $P_0^*\subseteq P^*$, but $h(t_{n+1})\notin P_0$ implies $h(t_{n+1})\notin P_0^*S_{n+1}^*$. Therefore  $$(P_0,\p_n)S_{n+1}^* \subseteq P_0^*S_{n+1}^*\subsetneq (P_0^*,h(t_{n+1}))S_{n+1}^*\subseteq P^*.$$
By items~3 and 4 above, $\hgt P_0^*=n+r$ and $P_0^*$ is a minimal prime of $(P_0, \p_n)S_n^*$. Since $\hgt P^*=n+r+1$, it follows that $P^*$ is a minimal prime of $(P_0, h(t_{n+1}),\p_n)S_{n+1}^*$. Since $(P_0, h(t_{n+1}),\p_n)S_{n+1}^*\subseteq (P_1,\p_n)S_{n+1}^*\subseteq P^*$, we have $P^*$ is a minimal prime of $(P_1,\p_n)S_{n+1}^*$.
But then, by Assumption~\ref{iff2}.1 on $\mathcal U$, we have $t_{n+1}-a\notin P^*$,
a contradiction.
This contradiction implies  that $\hgt Q_1=r$.

This completes the proof of  Claim~\ref{htler2} and thus
 the proof of Theorem~\ref{iff2}.\end{proof}

We use Theorem~\ref{exprimli},  Theorem~\ref{iff2} and Lemma~\ref{6.3.7} to prove in
Theorem~\ref{exnprimli}  the existence
over a countable excellent normal local
domain of dimension at least two
of an infinite sequence
of primarily limit-intersecting elements.

\begin{theorem} \label{exnprimli}  Let $R$ be a countable excellent normal local domain  of
dimension $d \ge 2$, let $y$ be a nonzero element in
the maximal ideal $\m$ of $R$, and let $R^*$ be the $(y)$-adic completion of $R$.
Let $n$ be a positive integer.  Then
\begin{enumerate}
\item If the elements $\tau_1, \ldots, \tau_{n} \in yR^*$ are primarily
limit-intersecting in $y$ over $R$, then there exists an element $\tau_a \in yR^*$
such that $\tau_1, \ldots, \tau_{n}, \tau_a$ are primarily limit-intersecting in $y$ over $R$.
\item  There exists an infinite sequence $\tau_1, \ldots, \tau_n, \ldots \in yR^*$
of elements that are primarily limit-intersecting in $y$ over $R$.
\end{enumerate}
\end{theorem}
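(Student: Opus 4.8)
The plan is to prove part~(1) as the inductive step and then deduce part~(2) by combining part~(1) with Theorem~\ref{exprimli} as the base case. For part~(1) I would work entirely inside Setting~\ref{excset2}, whose standing hypothesis is precisely that $\tau_1, \ldots, \tau_n \in yR^*$ are primarily limit-intersecting in $y$ over $R$. The goal is to produce an element $a \in y^2 S_{n+1}^*$ with $t_{n+1} - a \notin \bigcup\{P^* : P^* \in \mathcal U\}$: Theorem~\ref{iff2} then yields immediately that $\tau_1, \ldots, \tau_n, \tau_a$ are primarily limit-intersecting in $y$ over $R$, and the fact that $a \in y^2 S_{n+1}^*$ forces $\tau_a = \lambda_a(a) \in y^2 R^* \subseteq yR^*$, which is the membership asserted in the statement.

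The first thing I would establish is that the set $\mathcal U$ of Setting~\ref{excset2} is countable. Since $R$ is countable, the ring $S_{n+1} = R[t_1, \ldots, t_{n+1}]_{(\m, t_1, \ldots, t_{n+1})}$ is a countable Noetherian ring, so it has only countably many ideals and hence only countably many prime ideals. For each $P \in \Spec S_{n+1}$, the Noetherian ring $S_{n+1}^*$ has only finitely many primes minimal over $(P, \p_n)S_{n+1}^*$; since every element of $\mathcal U$ is such a minimal prime for $P = P^* \cap S_{n+1}$, the set $\mathcal U$ is countable.

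Next I would check that Lemma~\ref{6.3.7} applies to $T = S_{n+1}^*$ with distinguished element $t = t_{n+1}$. The ring $S_{n+1}^* = R^*[[t_1, \ldots, t_{n+1}]]$ is a Noetherian local domain, and because $R^*$ is complete in the $(y)$-adic topology the power series ring $S_{n+1}^*$ is as well, since a $(y)$-adic Cauchy sequence converges coefficientwise in each monomial in the $t_i$. The variable $t_{n+1}$ lies in the maximal ideal $(\m, t_1, \ldots, t_{n+1})S_{n+1}^*$ but not in its square, and every $P^* \in \mathcal U$ satisfies $y \notin P^*$, as recorded in Setting~\ref{excset2}. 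Applying Lemma~\ref{6.3.7} to the countable family $\{P^* : P^* \in \mathcal U\}$ then produces $a \in y^2 S_{n+1}^*$ with $t_{n+1} - a \notin \bigcup\{P^* : P^* \in \mathcal U\}$, and Theorem~\ref{iff2} completes part~(1).

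For part~(2), Theorem~\ref{exprimli} supplies a single element $\tau_1 \in yR^*$ that is primarily limit-intersecting in $y$ over $R$, serving as the base case; repeatedly invoking part~(1) extends any primarily limit-intersecting tuple $\tau_1, \ldots, \tau_n$ to one more term $\tau_1, \ldots, \tau_n, \tau_{n+1}$, producing an infinite sequence all of whose finite initial segments are primarily limit-intersecting, which by Definition~\ref{prlidef} is exactly what it means for the infinite sequence to be primarily limit-intersecting in $y$ over $R$. The only points requiring genuine care are the countability of $\mathcal U$ and the $(y)$-adic completeness of $S_{n+1}^*$ needed to license Lemma~\ref{6.3.7}; the substantive content, namely the equivalence between the avoidance condition on $t_{n+1} - a$ and the primarily limit-intersecting property, has already been delivered by Theorem~\ref{iff2}, so once those hypotheses are confirmed the remaining argument is essentially formal.
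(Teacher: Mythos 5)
Your proposal is correct and follows essentially the same route as the paper's proof: reduce part~(2) to part~(1) via Definition~\ref{prlidef} with Theorem~\ref{exprimli} as the base case, observe that $\mathcal U$ is countable because $S_{n+1}$ is countable and Noetherian, apply Lemma~\ref{6.3.7} to obtain $a \in y^2S_{n+1}^*$ avoiding $\bigcup\{P^* \mid P^* \in \mathcal U\}$, and conclude by Theorem~\ref{iff2}. Your explicit verifications that $S_{n+1}^*$ is $(y)$-adically complete, that $t_{n+1} \in \n^* \setminus (\n^*)^2$, and that $\tau_a = \lambda_a(a) \in y^2R^* \subseteq yR^*$ are details the paper leaves implicit, and they are all correct.
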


\begin{proof}
By Definition~\ref{prlidef}, item 1 implies item 2; thus it suffices to prove item~1.
Theorem~\ref{exprimli}  implies the existence
of an element $\tau_1 \in yR^*$ that is primarily limit-intersecting in $y$ over $R$.
 As in Setting~\ref{excset2}, let
$$
\mathcal U ~:= ~\{P^*\in \Spec S_{n+1}^*~|~ P^* \cap S_{n+1} = P, ~y\notin P ~\text{ and }
P^*\text{ is minimal over } (P,  \p_n)S_{n+1}^*  \}.
$$
Since the ring $S_{n+1}$ is countable and Noetherian, the set $\mathcal U$ is countable.
Lemma~\ref{6.3.7} implies that there exists an element $a \in y^2S_{n+1}^*$ such that
$$
t_{n+1} - a  ~\notin  ~  \bigcup \{P^* ~|~ P^* \in \mathcal U \}.
$$
By Theorem~\ref{iff2}, the elements $\tau_1, \ldots, \tau_n, \tau_a$
are primarily limit-intersecting in $y$ over $R$.
\end{proof}

\subheading{Normal Noetherian domains  that are not excellent.}

Using Theorem~ \ref{exprimli}, we establish in Theorem~\ref{nagtheorem},
for every countable  excellent normal local domain $R$  of
dimension $d \ge 2$,   the existence of a
primarily limit-intersecting  element $\eta \in yR^*$ such
that the   constructed Noetherian
domain
$$
B ~= ~ A ~ =  ~ R^* \cap \mathcal Q(R[\eta])
$$ is not a Nagata domain and hence is not excellent.\footnote{``Nagata" is defined after Question~\ref{RamQ}; see also  \cite[Definitions~2.3.1, 3.28]{powerbook}, \cite[pages 264, 260]{M}.}

\begin{theorem} \label{nagtheorem}
Let $R$ be a countable  excellent normal local domain  of
dimension $d \ge 2$, let $y$ be a nonzero element in
the maximal ideal $\m$ of $R$, and let $R^*$ be the $(y)$-adic completion of $R$.
There exists an element $\eta \in yR^*$ such that
\begin{enumerate}
\item  $\eta$  is primarily limit-intersecting in $y$  over $R$.
\item The associated intersection  domain $A := R^* \cap \mathcal Q(R[\eta])$  is equal
to its approximation domain $B$.
\item The ring $A$  has  a height-one prime ideal $\p$ such
that $R^*/\p R^*$ is not reduced.
\end{enumerate}
Thus the integral  domain $A = B$
associated to $\eta$ is a  normal Noetherian local domain that is not a Nagata
domain and hence is not excellent.
\end{theorem}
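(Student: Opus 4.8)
The plan is to produce $\eta$ as the square of a primarily limit-intersecting element, so that a square root of $\eta$ lives in the completion $R^*$ but not in $A$; this discrepancy will force a height-one prime of $A$ to ramify in $R^*$ and thereby make a quotient non-reduced. By Theorem~\ref{exprimli} I would first choose $\tau\in yR^*$ primarily limit-intersecting in $y$ over $R$ and set $\eta:=\tau^2$. Then $\eta\in y^2R^*\subseteq yR^*$ and $\eta$ is algebraically independent over $R$ since $\tau$ is. The polynomial ring $R[\tau]$ is free of rank two over $R[\eta]=R[\tau^2]$, with basis $\{1,\tau\}$, hence faithfully flat over it; composing with the flat inclusion $R[\tau]\hookrightarrow R^*[1/y]$ shows $R[\eta]\hookrightarrow R^*[1/y]$ is flat. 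Thus $\eta$ is primarily limit-intersecting (item~(1)), and by Theorem~\ref{7.5.45} the rings $A$ and $B$ coincide and are Noetherian (item~(2)); being a Noetherian Krull domain by Theorem~\ref{7.4.5}, $A$ is normal and local. Finally $\mathcal Q(A)=\mathcal Q(R)(\tau^2)$ sits with index two below $\mathcal Q(R)(\tau)$, so $\tau\notin\mathcal Q(A)$, whence $\tau\notin A$ while $\tau^2=\eta\in A$.

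\textbf{The ramified prime.} For item~(3) I would use a height-one prime $Q^*$ of $R^*$ on the generic formal fibre of $R\hookrightarrow R^*$, that is, with $Q^*\cap R=(0)$, along which $\tau$ has a simple zero, $v_{Q^*}(\tau)=1$. Such generic-fibre height-one primes exist precisely because $\dim R=d\ge 2$ makes that fibre positive-dimensional, and $Q^*\cap R=(0)$ forces $y\notin Q^*$. Put $\p:=Q^*\cap A$, a height-one prime of $A$ with $y\notin\p$ and $\eta\in\p$. Because $Q^*\cap R=(0)$, the valuation $v_{Q^*}$ vanishes on $\mathcal Q(R)^\times$ while $v_{Q^*}(\eta)=2$; since each element of $\mathcal Q(A)=\mathcal Q(R)(\eta)$ is a ratio of polynomials in $\eta$ over $\mathcal Q(R)$, its $v_{Q^*}$-value is even. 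Hence $v_{Q^*}(\mathcal Q(A)^\times)=2\mathbb Z$, the ramification index of $Q^*$ over $\p$ equals $2$, and $v_\p(\eta)=v_{Q^*}(\eta)/2=1$. Therefore $\eta$ generates $\p A_\p$, so $\p R^*_{Q^*}=\eta R^*_{Q^*}=\tau^2R^*_{Q^*}$; consequently $\tau\notin\p R^*$ although $\tau^2=\eta\in\p R^*$, and the image of $\tau$ is a nonzero square-zero element of $R^*/\p R^*$. Thus $R^*/\p R^*$ is not reduced.

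\textbf{Main obstacle.} The essential difficulty is to secure, \emph{simultaneously with} primary limit-intersection, the prescribed simple zero of $\tau$ along a generic-fibre height-one prime $Q^*$. I would fix such a $Q^*$ first and then run the selection of $\tau=\tau_a$ as in the proof of Theorem~\ref{exprimli}, strengthening the prime-avoidance step (Lemma~\ref{6.3.7}) so that the chosen $a\in y^2S^*$ not only avoids the countably many primes of $\mathcal U$ from Setting~\ref{excset} — which secures flatness, hence primary limit-intersection, via Theorem~\ref{iff} — but also makes $\tau_a$ vanish to first order along $Q^*$. Verifying that one element $a$ can meet both the flatness condition and the prescribed-simple-zero condition at once, a joint refinement of Lemma~\ref{6.3.7}, is the crux; without it one only knows $\eta$ is a square in $R^*$, which is not enough to rule out $\eta$ being a unit times a square in $A$ (in which case every $v_\p(\eta)$ could be even and no ramification would occur).

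\textbf{Conclusion.} Since $R^*/\p R^*$ is the $(y)$-adic completion of $A/\p$, its $\m$-adic completion $\widehat{A/\p}$ is a further faithfully flat extension and so still contains the nonzero square-zero image of $\tau$; hence $A/\p$ is analytically ramified. A Nagata (pseudo-geometric) domain has every quotient analytically unramified, so $A$ is not a Nagata domain, and as excellent rings are Nagata, $A$ is not excellent. Combined with item~(2), this realizes $A=B$ as a normal Noetherian local domain, dominating $R$ with $(y)$-adic completion $R^*$, that fails to be excellent.
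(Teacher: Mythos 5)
Your proof of items (1) and (2) is correct and is essentially the paper's argument: take a primarily limit-intersecting $\tau$ from Theorem~\ref{exprimli}, pass to a power, and transfer flatness using freeness of $R[\tau]$ over the subring. The difference is that you raise $\tau$ itself to a power, whereas the paper raises the translate $x+\tau$, where $x\in\m$ is chosen with $\hgt(x,y)R=2$; this is exactly where your item (3) develops a genuine gap. Your argument for (3) is conditional on an existence statement you never prove and yourself call ``the crux'': that $\tau$ can be chosen simultaneously primarily limit-intersecting and with a simple zero along a height-one prime $Q^*$ of $R^*$ satisfying $Q^*\cap R=(0)$. This is not a routine refinement of Lemma~\ref{6.3.7}. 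Nothing you prove (and nothing in Theorem~\ref{iff}, whose avoidance set $\mathcal U$ only constrains primes $P^*$ with $\hgt(P^*\cap S)=\hgt P^*$) excludes the degenerate possibility that $\tau = y\cdot v$ with $v$ a unit of $R^*$. For such a $\tau$ every height-one prime of $R^*$ containing $\tau$ is a minimal prime of $\tau R^*=yR^*$, hence contains $y$ and meets $R$ nontrivially, so the required generic-fibre prime $Q^*$ does not exist at all; moreover $\eta=\tau^2=y^2v^2$ with $v^{2}$ and $v^{-2}$ both in $\mathcal Q(R[\eta])\cap R^*=A$, so $\eta$ is $y^2$ times a unit of $A$ and the parity/ramification mechanism cannot even start --- precisely the failure mode you flag. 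A complete proof must rule this out, and yours does not.

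The paper's proof shows that the missing machinery is unnecessary, because the one idea your construction omits is the translation by $x$. With $\eta:=(x+\tau)^n$ and $\hgt(x,y)R=2$, the element $x+\tau$ cannot divide any power of $y$ in $R^*$: a height-one prime containing $x+\tau$ and $y$ would contain $(x,y)R^*$, which has height two. Consequently $\eta$ remains a nonunit in $B[1/y]$ and lies in no height-one prime of $B$ over $yB$; since $\eta$ is a prime element of the polynomial ring $R[\eta]$ and $B[1/y]$ is a localization of $R[\eta]$ (Theorem~\ref{7.4.4}), the principal ideal $\p:=\eta B$ is itself a height-one prime of $B=A$. Then $\p R^*=(x+\tau)^nR^*$, and the image of $x+\tau$ in $R^*/\p R^*$ is a nonzero nilpotent simply because $x+\tau$ is a nonzero nonunit of the domain $R^*$. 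Thus the paper replaces your unproven joint prime-avoidance step by an elementary height computation and makes the witnessing prime principal, rather than the contraction of a carefully selected generic-fibre prime. Your closing deduction (analytically ramified, hence not Nagata, hence not excellent) agrees with the paper and is fine.
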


\begin{proof}
Since $\dim R \ge 2$, there exists $x \in \m$ such that $\hgt (x,y)R = 2$.
By Theorem~ \ref{exprimli},  there
exists $\tau \in yR^*$ such that $\tau$ is primarily limit-intersecting in $y$ over $R$.
Hence the extension
$R[\tau] \longrightarrow R^*[1/y]$ is flat.   Let $n \in \N$ with $n \ge 2$, and let
$\eta := (x + \tau)^n$.
Since $\tau$ is algebraically independent over $R$,
the element $\eta$ is also algebraically independent over $R$.  Moreover,
the polynomial ring $R[\tau]$ is a free
$R[\eta]$-module with $1, \tau, \ldots, \tau^{n-1}$ as a free module basis.  Hence the
map $R[\eta] \longrightarrow R^*[1/y]$ is flat.   It follows
that $\eta$ is primarily limit-intersecting in $y$ over $R$.  Therefore
the intersection domain $A := R^* \cap \mathcal Q(R[\eta])$
is equal to its associated approximation domain $B$  and is a normal
Noetherian domain with $(y)$-adic completion $R^*$.
Since $\eta$ is a prime element of the polynomial ring $R[\eta]$ and $B[1/y]$
is a localization of $R[\eta]$, it follows that
$\p := \eta B$
is a  height-one prime ideal of $B$.   Since $\tau \in R^*$, and $\eta = (x + \tau)^n$,
the ring $R^*/\p R^*$ contains nonzero
nilpotent elements.  Since $\widehat R = \widehat B$ is faithfully flat over $R^*$,
it follows that $\widehat B/\p\widehat B$ has nonzero nilpotent elements.
Since a Nagata local domain is analytically unramified, it follows
 that the normal Noetherian domain $B$ is not a Nagata ring,  \cite[page~264]{M} or
\cite[(32.2)]{N2}.
\end{proof}

\section{Other results and examples using the construction} \label{section4}

We use the following notation for the beginning of this section,  and make several remarks
concerning properties of and relationships among the integral domains being considered.

\begin{notationremarks}\label{4.7.n}
Let $k$ be a field, let $x$ and $y$ be
indeterminates over $k$,  and let
$$
\sigma ~ := ~ \sum_{i=1}^\infty a_ix^i ~ \in ~ xk[[x]] \qquad \ {\text{\rm and
}} \qquad \
\tau ~:= ~\sum_{i=1}^\infty b_iy^i ~\in ~ yk[[y]]
$$
be formal power series that are algebraically independent over the
fields $k(x)$ and $k(y)$, respectively. Let $R:=k[x,y]_{(x,y)}$, and
let $\sigma_n$, $\tau_n$ be the $n^{\text{th}}$ endpieces of
$\sigma,\tau$  respectively.   Define

\begin{equation}\aligned C_n &:=~ k[x,\sigma_n]_{(x,\sigma_n )},
 \quad C := k(x,\sigma) \cap k[[x]]= \varinjlim(C_n) = \bigcup_{n=1}^\infty C_n \ ; \\
D_n& :=~ k[y,\tau_n]_{(y,\tau_n)},\quad D := k(y,\tau)\cap k[[y]]  =
 \varinjlim(D_n) = \bigcup_{n=1}^\infty D_n;
  \\
  U_{n} &:= ~k[x,y,\sigma_n,\tau_n],   \quad U := \varinjlim U_{n} = \bigcup_{n=1}^{\infty} U_n; \\
   B_{n} &:= ~{k[x,y,\sigma_n,\tau_n]}_{(x,y,\sigma_n , \tau_n )}\quad B :=
   \varinjlim (B_{n}) = \bigcup_{n=1}^{\infty} B_n;\\
   A & := k(x,y,\sigma,\tau )\,
\cap \, k[[x,y]].
\endaligned \tag{\ref{4.7.n}.0}\end{equation}

\vskip 3pt

 Since  $k[[x,y]]$ is the $(x,y)$-adic completion of
the Noetherian ring $R$,  the ring
$k[[x,y]] = \widehat R$ is faithfully flat over $R$. Hence we have
$$
(x,y)^nk[[x,y]] ~\cap ~ R ~= ~(x,y)^nR
$$
for each $n \in \N$.

The relationships
$$
\sigma_{n} ~= ~ -xa_{n+1}+ x\sigma_{n+1} \quad \text{and} \quad  \tau_n=-yb_{n+1}+ y\tau_{n+1}
$$
among the endpieces imply for each positive integer $n$ the
inclusions
$$
C_n ~\subset ~ C_{n+1}, \quad D_n ~ \subset ~D_{n+1},\quad \text{ and }
\quad B_{n}~ \subset ~B_{n+1}.
$$
Moreover, for each of these inclusions we have  birational
domination of the larger local ring over the smaller,  and the local rings
$C_n, D_n, B_n$ are all dominated by $k[[x,y]] = \widehat R$.

Since $(x, y,\sigma_n, \tau_n)U_n$ is a maximal ideal of $U_n$ that is contained in
$(x,y)U$, a proper ideal of $U$, it follows that $(x,y)U \cap U_n= (x,
y, \sigma_n, \tau_n)U_n$. Since $B_n$ is the localization of the polynomial
ring $U_n$ at the maximal ideal $(x, y,\sigma_n, \tau_n)U_n$, we have
 $(x,y)B\cap
B_n=(x,y,\sigma_n,\tau_n)B$ for each $n \in \N$.

We have
 $\sigma_{n+1}\in U_n[\frac 1x]\subseteq U_n[\frac
1{xy}]$ and  $\tau_{n+1}\in U_n[\frac 1y]\subseteq
U_n[\frac 1{xy}]$, for each $n \in \N$. Hence  $U_{n+1}\subseteq U_n[\frac
1{xy}]$, and    $U\subseteq U_n[\frac 1{xy}]$, and  $U_n[\frac
1{xy}]=U[\frac{1}{xy}].$

The rings $C$ and $D$ are rank-one discrete
valuation domains that are directed unions of two-dimensional
regular local domains. Each of the rings $B_n$  is a  four-dimensional
regular local domain that is a localized polynomial ring  over the
field $k$. Thus $B$ is the directed union of a chain of
four-dimensional regular local domains.
\end{notationremarks}

\begin{theorem} \label{4.2.11t} Assume the setting of  Notation \ref{4.7.n}. Then
the ring   $A$  is a
two-dimensional regular local domain  that birationally dominates
the ring $B$; $A$ has maximal ideal $(x,y)A$ and
completion $\widehat A = k[[x,y]]$. Moreover we have:
\begin{enumerate}
\item The rings $U$ and $B$ are UFDs,
\item  $B$ is a
local  Krull domain with maximal ideal $\n = (x,y)B$,
\item The dimension of $B$ is either 2 or 3, depending on the choice of $\sigma$ and $\tau$,
\item $B$ is Hausdorff
in the topology defined by the powers of $\n$,
\item The $\n$-adic
completion $\widehat B$ of $B$ is canonically isomorphic
to $k[[x,y]]$,
and
\item  The following statements are equivalent:
\begin{enumerate}
\item $B = A$.
\item $B$ is a
two-dimensional regular local domain.
\item  $B$ is Noetherian.
\item Every finitely generated ideal of $B$ is closed in the
$\n$-adic topology on $B$.
\item  Every principal  ideal of $B$ is closed in the
$\n$-adic topology on $B$.
\end{enumerate}
\end{enumerate}
\end{theorem}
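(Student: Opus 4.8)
The plan is to run the intersection–approximation machinery of Section~\ref{basicsapprox} in the $(x,y)$-adic (that is, $\n$-adic) setting, with $\widehat R=k[[x,y]]$ playing the role of the completion $T^*$ and the pair $\sigma,\tau$ as the algebraically independent elements adjoined. First I would record that $A$ is a Krull domain, being the intersection of the Krull domain $k[[x,y]]$ with the subfield $k(x,y,\sigma,\tau)$ of its fraction field, and that $B=\bigcup_nB_n$ is a local Krull domain, being the directed union of the four-dimensional regular local domains $B_n$ under the birational dominating inclusions of Notation~\ref{4.7.n}; moreover $B\subseteq A$ birationally with $A$ dominating $B$, since $\mathcal Q(B)=k(x,y,\sigma,\tau)=\mathcal Q(A)$. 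That $A$ and $B$ are local with maximal ideals $(x,y)A$ and $\n=(x,y)B$ then follows as in Theorem~\ref{7.4.4}.3: any $f$ lying outside $(x,y)\widehat R$ is a unit of $\widehat R$ and, belonging to the field $k(x,y,\sigma,\tau)$, has its inverse again in the intersection.

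Next I would identify the completions. The equalities $R/(x,y)^tR=\widehat R/(x,y)^t\widehat R$ from Notation~\ref{4.7.n}, together with $R\subseteq B\subseteq A\subseteq\widehat R$, force $R/(x,y)^tR=B/(x,y)^tB=A/(x,y)^tA=\widehat R/(x,y)^t\widehat R$ for every $t$ by the sandwiching argument behind Theorem~\ref{7.4.4}.2. Passing to inverse limits yields the canonical isomorphisms $\widehat A\cong\widehat B\cong k[[x,y]]$, which is item~(5), and intersecting over $t$ gives $\bigcap_t(x,y)^tB=\bigcap_t\big((x,y)^t\widehat R\cap B\big)=(0)$, which is the Hausdorff assertion~(4). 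Since $\widehat A=k[[x,y]]$ is a two-dimensional regular local ring, once $A$ is shown to be Noetherian it follows that $A$ is itself two-dimensional and regular, because a Noetherian local ring whose completion is regular is regular of the same dimension; this gives the opening assertion of the theorem.

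The heart of the argument, and the step I expect to be the main obstacle, is to prove that $A$ is Noetherian, and this is exactly where the special geometry is used: $\sigma\in k[[x]]$ and $\tau\in k[[y]]$ involve disjoint sets of variables. The route I would take is to verify flatness of the two-direction analogue of the map $\varphi$ of Equation~\ref{Krullset}.0, namely of $R[\sigma,\tau]\hookrightarrow\widehat R[1/(xy)]$, by means of Corollary~\ref{16.2.3p}: since $R$ is excellent, $R\to\widehat R[1/(xy)]$ has regular (hence Cohen--Macaulay) fibres and $R\to R[\sigma,\tau]$ has regular fibres, so flatness reduces to the inequality $\hgt(Q^*\cap R[\sigma,\tau])\le\hgt Q^*$ for every $Q^*\in\Spec\widehat R$ with $xy\notin Q^*$. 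The key point is that modulo such a prime the two transcendentals decouple into the one-variable situations governing the discrete valuation rings $C$ and $D$, so the inequality holds unconditionally; from this flatness one deduces that $A$ is Noetherian by the two-direction analogue of Theorem~\ref{7.5.45}. I would stress that this interior flatness, which controls Noetherianness of $A$, is strictly weaker than the weak flatness of $B\hookrightarrow\widehat R$ that controls the equality $B=A$: the latter also constrains the height-one primes lying over $x$ and over $y$ and may fail, which is precisely what leaves room for $\dim B=3$.

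It remains to assemble the listed items. For~(1) I would invoke Nagata's criterion for Krull domains: $U[1/(xy)]=U_n[1/(xy)]$ is a localization of the polynomial ring $U_n$ and hence a UFD, while $x$ and $y$ are prime elements of $U$ (their residue rings being domains by the endpiece relations of Notation~\ref{4.7.n}), so $U$, and likewise $B$, is a UFD. For~(3) the lower bound $\dim B\ge 2$ comes from the chain $(0)\subset\q\subset\n$ furnished by a minimal prime $\q$ of $yB$ with $x\notin\q$, while the upper bound $\dim B\le 3$ is obtained by localizing at $xy$, where $B[1/(xy)]=U[1/(xy)]$ is a localization of the rings $U_n$, and then accounting for the two height-one directions $xB$ and $yB$; the two values $2$ and $3$ are realized according as the weak flatness of the previous paragraph holds or fails. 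For the equivalences~(6), the implication (c)$\Rightarrow$(a) is Theorem~\ref{7.4.5}.4, (a)$\Rightarrow$(b) is immediate from the already-established fact that $A$ is a two-dimensional regular local domain, and (b)$\Rightarrow$(c) is trivial; the chain (c)$\Rightarrow$(d)$\Rightarrow$(e) is the Krull intersection theorem together with the inclusion of principal ideals among the finitely generated ones, and the closing implication (e)$\Rightarrow$(a) comes from Theorem~\ref{7.5.5}, since closedness of every principal ideal of $B$ in the $\n$-adic topology is exactly the weak flatness of $B\hookrightarrow\widehat R$ that is equivalent to $B=A$.
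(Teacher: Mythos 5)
Your scaffolding (the Krull-domain generalities, the sandwich argument giving $\widehat B=\widehat A=k[[x,y]]$ and the Hausdorff property, Nagata's criterion for the UFD property) matches the paper, but the central step of your proof rests on a false claim. You assert that the map $R[\sigma,\tau]\hookrightarrow\widehat R[1/(xy)]$ is flat \emph{unconditionally} because ``modulo such a prime the two transcendentals decouple,'' and you derive Noetherianness of $A$ (hence the opening assertion of the theorem) from this. The paper's own Example~\ref{4.7.13} refutes this: taking $\tau=\sigma(y)$, i.e.\ $b_i=a_i$ for all $i$, one has $\sigma-\tau\in(x-y)k[[x,y]]$, so the height-one prime $Q^*=(x-y)k[[x,y]]$, which survives in $\widehat R[1/(xy)]$ since $xy\notin Q^*$, contracts in $R[\sigma,\tau]$ to a prime containing $(x-y,\,\sigma-\tau)R[\sigma,\tau]$, an ideal of height two; since a flat map satisfies going-down, the map is not flat. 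Note the failure occurs at an ``interior'' prime containing neither $x$ nor $y$, exactly where you claim decoupling holds. Your framework is also internally inconsistent with the paper: if your unconditional flatness held and a two-direction analogue of Theorem~\ref{7.5.45} were available, then $B=A$ would hold for every choice of $\sigma,\tau$, contradicting $B\subsetneq A$ in Example~\ref{4.7.13}. Moreover, the ``two-direction analogues'' of Theorems~\ref{7.5.45} and \ref{7.5.5} that you invoke (also Theorem~\ref{7.4.5}.4 for (c)$\Rightarrow$(a), and Theorem~\ref{7.5.5} for (e)$\Rightarrow$(a)) are established only in Setting~\ref{7.4.1}, where all adjoined elements are power series in the single element $y$; in Notation~\ref{4.7.n} the endpieces of $\sigma$ and $\tau$ are taken in two different directions, so none of those theorems applies as stated.

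The ingredient you are missing is the one the paper actually uses: the assertion that $A=k(x,y,\sigma,\tau)\cap k[[x,y]]$ is a two-dimensional regular local domain with maximal ideal $(x,y)A$ and completion $k[[x,y]]$ holds for \emph{every} choice of algebraically independent $\sigma\in xk[[x]]$, $\tau\in yk[[y]]$, and it comes from a theorem of Valabrega \cite{V} (see \cite[Proposition~4.13]{powerbook}), not from any flatness of $\varphi$. Indeed, in Example~\ref{4.7.13} the ring $A$ is a two-dimensional RLR even though the flatness you posit fails and $B$ is three-dimensional and non-Noetherian, which shows that tying the Noetherian property of $A$ to flatness is the wrong mechanism in this two-directional setting. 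Once Valabrega gives the statement about $A$, the equivalence (6) closes elementarily and without weak-flatness theory: $(a)\Rightarrow(b)$ is immediate, $(b)\Rightarrow(c)\Rightarrow(d)\Rightarrow(e)$ are routine, and for $(e)\Rightarrow(a)$ one uses that $\widehat B=\widehat A$ and that $bA=b\widehat A\cap A$ by faithful flatness of the completion of the Noetherian local ring $A$, so closedness of $bB$ yields $bB=b\widehat B\cap B=b\widehat A\cap A\cap B=bA\cap B$ for all $b\in\n$, whence $B=A$ by birational domination. That direct argument, together with Valabrega's theorem (and, for item (3), the paper's observation that $B$ is birational over the three-dimensional Noetherian domain $C[y,\tau]$, so $\dim B\le 3$ by Cohen's theorem), is what carries the proof.
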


\begin{proof} The assertions about  $A$ follow  from
a theorem of Valabrega \cite{V}; see \cite[Proposition~4.13]{powerbook}.
Since $U_0$ has field of fractions $k(x,y,\sigma,\tau)=\mathcal Q(A)$ and
$U_0\subseteq B\subseteq A$, the extension $B \hookrightarrow A$
is birational.
Since $B$ is the directed union of the
four-dimensional regular local domains $B_{n}$
and    $(x,y)B\cap
B_n=(x,y,\sigma_n,\tau_n)B$ for each $n \in \N$,
we see
that $B$ is local with maximal ideal $\n = (x,y)B$.
Since $B$ and $A$ are both dominated by $k[[x,y]]$, it
follows that $A$ dominates $B$.

To prove that $U$ and $B$ are UFDs, we use that $U_n$ is a
polynomial ring over a field and $U_n[\frac 1{xy}]=U[\frac{1}{xy}]$.
Hence  the ring $U[\frac{1}{xy}]$ is a UFD. For each $n \in \N$, the
principal ideals $xU_n$ and $yU_n$ are prime ideals in the
polynomial ring $U_n$. Therefore $xU$ and $yU$ are principal prime
ideals of $U$. Moreover, $U_{xU} = B_{xB}$ and $U_{yU} = B_{yB}$ are
DVRs since each is the contraction to the field $k(x,y,\sigma,\tau)$
of the $(x)$-adic or the $(y)$-adic valuations of $k[[x,y]]$. A
theorem of Nagata \cite[Theorem~6.3, p. 21]{Sam} implies that $U$ is
a UFD; see also \cite[Theorem~2.9 and Fact~2.11]{powerbook}. Since
$B$ is a localization of $U$, the ring $B$ is a UFD. This completes
the proof of items 1 and 2.

Since $B$ is dominated by $k[[x,y]]$, the intersection $\cap_{n=1}^\infty \n^n=(0)$, and so $B$ is Hausdorff in
the topology defined by the powers of $\n$. We have local
injective maps $R \hookrightarrow B \hookrightarrow \widehat R$,
and for each positive integer $n$, we have $\m^nB = \n^n$, $\m^n\widehat R = \widehat \m^n$
and $\widehat \m^n \cap R = \m^n$. Since the natural map
$R/\m^n \to \widehat R/\m^n\widehat R = \widehat R/\widehat \m^n$
is an isomorphism, the  map $R/\m^n \to B/\m^n B = B/\n^n$ is injective and
the  map $B/\n^n  \to \widehat R/\n^n\widehat R = \widehat R/\widehat \m^n$ is surjective.
Since $B/\n^n$ has finite length as an $R$-module, it follows that
$R/\m^n \cong B/\n^n \cong \widehat R/\widehat \m^n$ for each $n \in \N$ and hence
$\widehat B = \widehat R = k[[x,y]]$.
Notice that
 $B$ is a birational extension of
the three-dimensional Noetherian domain $C[y, \tau]$.   The
dimension of $B$ is at most 3 by a theorem of Cohen,
\cite[Theorem~15.5]{M} or \cite[Theorem~2.9]{powerbook}. This
completes the proof of items 3, 4 and 5.

For item 6, since  $A$ is a two-dimensional regular local ring,
$(a) \implies (b)$. Clearly $(b) \implies (c)$.  Since $B$ is local by item~2, and
since the completion of a Noetherian local ring is a faithfully flat
extension,  we have  $(c) \implies (d)$.  It is clear that $(d) \implies (e)$.
To complete the proof of Theorem~\ref{4.2.11t},  it suffices to show that
$(e) \implies (a)$.  Since $A$ birationally dominates $B$, we have $B = A$ if and only if
$bA \cap B = bB$ for every element $b \in \n$.  The
 principal ideal $bB$ is closed in the $\n$-adic topology on $B$ if and
only if $bB = b\widehat B \cap B$.  Also $\widehat B = \widehat A$
 and $bA = b\widehat A \cap A$, for every $b\in B$.  Thus (e) implies, for every $b\in B$,
 $$bB = b\widehat B\cap B=  b\widehat A\cap B= b\widehat A\cap A\cap B=bA \cap B,$$ and so $B = A$.
This completes the proof of Theorem~\ref{4.2.11t}.
\end{proof}

Depending on the choice of $\sigma$ and $\tau$, the ring $B$ may
fail to  be Noetherian. Example~\ref{4.7.13} shows that in the
setting of Theorem~\ref{4.2.11t} the ring $B$ can be strictly
smaller than $ A := k(x,y,\sigma,\tau) \cap k[[x,y]]$.

\begin{example} \label{4.7.13} Using the setting of Notation~\ref{4.7.n},
let    $\tau \in k[[y]]$ be defined to be $\sigma(y)$, that is, set
$b_i := a_i$ for every  $i \in \N$.
 We  then have that
$\theta:=\frac{\sigma - \tau}{x-y}  \in A$. Indeed,
$$
\sigma -  \tau = a_1(x-y) + a_2(x^2 - y^2) + \cdots + a_n(x^n - y^n) + \cdots,
$$
and so  $\theta =\frac{\sigma - \tau}{x-y} \in k[[x, y]] \cap k(x,y,  \sigma, \tau) = A$.
 As a specific  example, one may take  $k := \Q$ and
set $\sigma :=e^x-1$ and  $\tau :=e^y-1$. The ring $B$ is a
localization of the ring $U := \bigcup_{n \in \N}k[x,y,\sigma_n,
\tau_n]$.

\begin{claim} \label{4.7.13.1} The element $\theta$ is not in $B$.
\end{claim}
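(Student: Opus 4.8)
The plan is to show that $\theta \notin B_n$ for every $n$; since $B = \bigcup_{n=1}^\infty B_n$, this is enough. Fix $n$ and set $p_n(t) := \sum_{i=1}^n a_i t^i$. The defining relations among the endpieces give $\sigma = p_n(x) + x^n\sigma_n$, and, because $b_i = a_i$ for all $i$, the series $\tau = \sum a_i y^i$ decomposes with the \emph{same} polynomial head, $\tau = p_n(y) + y^n\tau_n$. Hence
$$\theta(x-y) = \sigma - \tau = \bigl(p_n(x) - p_n(y)\bigr) + \bigl(x^n\sigma_n - y^n\tau_n\bigr).$$
The first summand is divisible by $x-y$ in $k[x,y]$, with quotient $\sum_{i=1}^n a_i(x^{i-1} + \cdots + y^{i-1}) \in k[x,y] \subseteq B_n$. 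Since $B_n$ is a domain, this reduces the claim to showing that $\theta_n := (x^n\sigma_n - y^n\tau_n)/(x-y) \notin B_n$.

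Next I would use that $B_n$ is the localization of the polynomial ring $U_n = k[x,y,\sigma_n,\tau_n]$ at its maximal ideal $(x,y,\sigma_n,\tau_n)$, where $x,y,\sigma_n,\tau_n$ are algebraically independent over $k$ (Notation~\ref{4.7.n}). In this polynomial ring $x-y$ is a prime element contained in the maximal ideal, so $(x-y)B_n \cap U_n = (x-y)U_n$; equivalently, $\theta_n \in B_n$ if and only if $x-y$ divides $x^n\sigma_n - y^n\tau_n$ in $U_n$. To test this I reduce modulo $(x-y)$, applying the $k$-algebra map $U_n \to U_n/(x-y)U_n \cong k[x,\sigma_n,\tau_n]$ that sends $y \mapsto x$ while keeping $\sigma_n$ and $\tau_n$ as independent variables. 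The numerator maps to $x^n(\sigma_n - \tau_n)$, which is nonzero in the domain $k[x,\sigma_n,\tau_n]$. Thus $x-y$ does not divide $x^n\sigma_n - y^n\tau_n$, so $\theta_n \notin B_n$ and therefore $\theta \notin B_n$; letting $n$ vary yields $\theta \notin B$.

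The conceptual heart, and the step I would be most careful about, is the contrast between divisibility by $x-y$ in the power series ring and in the polynomial ring $U_n$. As an honest power series, $\sigma - \tau$ \emph{is} divisible by $x-y$ in $\widehat R = k[[x,y]]$, which is precisely why $\theta \in A$; the hypothesis $b_i = a_i$ is used only to make the polynomial heads $p_n(x)$ and $p_n(y)$ coincide. What must not be confused is the fate of the endpieces: although $\tau$ arises from $\sigma$ by the substitution $x \mapsto y$, the endpieces $\sigma_n$ and $\tau_n$ remain algebraically independent, so after the identification $y = x$ they do not collapse and $x^n(\sigma_n - \tau_n)$ survives as a nonzero element. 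Securing (or citing from Notation~\ref{4.7.n}) this algebraic independence, and treating $\sigma_n,\tau_n$ as genuine variables throughout the reduction modulo $(x-y)$, is the one place where a slip would be easy; the remaining steps are routine localization and polynomial arithmetic.
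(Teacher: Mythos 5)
Your proof is correct, and it reaches the conclusion by a different mechanism than the paper's. The paper never decomposes $\sigma-\tau$ into polynomial heads and endpiece tails; instead it works globally with the polynomial ring $S := k[x,y,\sigma,\tau]$ in the original series, noting that $\sigma-\tau \in U$ and that $U = \bigcup_n U_n \subseteq S[\frac{1}{xy}] \subset S_{(x-y)S}$ (the last inclusion because $xy \notin (x-y)S$), so that $\theta \in B$ would force $\sigma-\tau \in (x-y)S_{(x-y)S} \cap S = (x-y)S$, contradicting the algebraic independence of $x,y,\sigma,\tau$ over $k$. You replace that containment trick with a per-$n$ argument inside each $U_n$: the head cancellation (this is where you invoke $b_i = a_i$, whereas the paper needs that hypothesis only to ensure $\theta \in A$ in the first place), the contraction $(x-y)B_n \cap U_n = (x-y)U_n$ (valid since $(x-y)U_n$ is prime and lies in the maximal ideal at which $U_n$ is localized), and the substitution $y \mapsto x$ leaving $x^n(\sigma_n - \tau_n) \neq 0$ in $k[x,\sigma_n,\tau_n]$. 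The punchline is the same in both proofs --- setting $y=x$ does not kill the transcendental part --- but your route is more elementary and self-contained polynomial arithmetic at the level of the individual rings $B_n$, while the paper's is shorter, avoids endpiece bookkeeping, and in fact establishes the stronger statement that $(\sigma-\tau)/(x-y) \notin B$ for \emph{any} $\sigma,\tau$ making $x,y,\sigma,\tau$ algebraically independent over $k$. Both arguments rest on the fact recorded in Notation~\ref{4.7.n} that the four relevant elements are algebraically independent over $k$ (equivalently that $U_n$, respectively $S$, is a four-variable polynomial ring), and you correctly identify this as the step needing care.
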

\begin{proof}
If $\theta$ is an element of $B$, then
$$\sigma-\tau  ~\in ~ (x-y)B ~\cap ~U ~= ~ (x-y)U.
$$
Let $S:= k[x,y,\sigma, \tau]$ and let $U_n := k[x,y,\sigma_n, \tau_n]$ for each positive integer $n$.
We have $$
U ~~= ~~\bigcup_{n\in\N}  U_n ~\subseteq ~ S[\frac{1}{xy}] ~ \subset ~  S_{(x-y)S},
$$
where the last inclusion is because $xy \not\in (x-y)S$.  Thus $\theta \in B$ implies that
$$\sigma-\tau\in (x-y)S_{(x-y)S} ~\cap ~S ~ = ~(x-y)S,$$
but this contradicts the fact that $x,y, \sigma, \tau$ are algebraically
independent over $k$, and thus $S$ is a polynomial ring over $k$ in $x,y, \sigma, \tau$.
\end{proof}

Therefore $\frac{\sigma - \tau}{x-y} \not\in B$, and so $B
\subsetneq A$   and $(x-y)B \subsetneq (x-y)A \cap B$. Since an
ideal of $B$ is closed in the $\n$-adic topology if and only if the
ideal is contracted from $\widehat B$  and since $\widehat B =
\widehat A$, the principal ideal  $(x-y)B$  is not closed in the
$\n$-adic topology on $B$. Using Theorem~\ref{4.2.11t}, we conclude that $B$ is  a
non-Noetherian three-dimensional local Krull domain  having
a two-generated maximal ideal such that $B$ birationally dominates a
four-dimensional regular local domain. In this connection, also  see \cite[Example~8.11]{powerbook}.
\end{example}

The setting of (\ref{4.7.n}) is balanced in $x$ and $y$ in the sense that the
roles of $x$ and $y$ are interchangeable.  A more truly iterative process
is described in Setting~\ref{4.7.nn}.

\begin{setting}\label{4.7.nn}
Let $k$ be a field, let $x$  be an
indeterminate over $k$,  and let
$$
\sigma ~:= ~ \sum_{i=1}^\infty a_ix^i \in k[[x]] \qquad \ {\text{\rm with each}} \qquad \
 a_i ~\in ~ k
$$
be a formal power series that is algebraically independent over the
field $k(x)$. As in Example~\ref{easy},
let $\sigma_n$ be the $n^{\text{th}}$ endpiece of
$\sigma$ and define
$$
C_n ~:=~ k[x,\sigma_n]_{(x,\sigma_n )} \quad \text{ and } \quad
C := k(x,\sigma) \cap k[[x]]= \varinjlim(C_n) = \bigcup_{n=1}^\infty C_n.
$$
Let $y$ be an indeterminate over $C$ and let
$$
\tau ~ := ~ \sum_{i=1}^\infty b_iy^i \in C [[y]] \qquad \ {\text{\rm with each}} \qquad \
 b_i ~\in ~ C
$$
be a formal power series that is algebraically independent over $C[y]$.  Notice that as
a special case we may have each $b_i \in k$.  Let $\tau_n$ be the $n^{\text{th}}$ endpiece of
$\tau$ and define
\begin{equation} \aligned
  U_{n} &:= ~k[x,y,\sigma_n,\tau_n],   \quad U := \varinjlim U_{n} = \bigcup_{n=1}^{\infty} U_n; \\
   B_{n} &:= ~{k[x,y,\sigma_n,\tau_n]}_{(x,y,\sigma_n , \tau_n )}\quad B :=
   \varinjlim (B_{n}) = \bigcup_{n=1}^{\infty} B_n;\\
   D_{n} &:= ~C[y, \tau_n]_{(x,y, , \tau_n )}\quad D :=
   \varinjlim (D_{n}) = \bigcup_{n=1}^{\infty} D_n;\\
   A & := k(x,y,\sigma,\tau )\,
\cap \, k[[x,y]].
\endaligned \tag{\ref{4.7.nn}.0}
\end{equation}
Notice that each $U_n \subset k[[x, y]]$ and $U_n$ is a polynomial ring in
$x, y, \sigma_n, \tau_n$ over the field $k$.   Since $C \subset B$, we have $B = D$.
\end{setting}

\vskip 3pt

\begin{remark}  \label{7.6.1s}   Let the notation be as in Setting~\ref{4.7.nn}.
For certain choices of $\sigma$ and $\tau$,
the ring $B$ is  Noetherian  with $B = A$.  Let $k$ be the field $\Q$ of rational numbers.
Thus  $R := \Q[x,y]_{(x,y)}$
is  the localized polynomial ring in the variables $x$ and $y$,
and the completion $\widehat R$ of $R$ with respect to its  maximal
ideal $\m:=(x,y)R$ is  $\widehat R  = \Q[[x,y]]$, the formal power
series ring in  $x$ and $y$.  Let $\sigma := e^x - 1 \in \Q[[x]]$, and
$C := \Q[[x]] \cap \Q(x, \sigma)$. Thus $C$ is an excellent DVR with maximal ideal $xC$,
and $T := C[y]_{(x,y)C[y])}$ is an excellent countable two-dimensional regular
local ring with maximal ideal $(x,y)T$ and
with $(y)-$adic completion $C[[y]]$.  The UFD $C[[y]]$  has  maximal ideal $\n=(x,y)$.
Since  $T$ is countable, Theorem~\ref{exprimli} implies that there exists
$\tau \in  C[[y]]$ that is primarily limit-intersecting in $y$ over $R$.
Hence
for this choice of $\sigma \in \Q[[x]]$
and $\tau \in C[[y]]$,  we have $A = \Q(x, y, \sigma, \tau) \cap C[[y]]$ is
Noetherian and equal to its approximation domain  $D = B$.

To fit the setting of  Notation~\ref{4.7.n} with $k = \Q$, one wants $\tau \in \Q[[y]]$
rather than $\tau \in C[[y]]$. An example with this more restrictive property is
given in \cite[Example~7.15]{powerbook}.
\end{remark}

\subheading{Weakly flat extensions that are not flat.}

Let $d$ be an integer with $d \ge 2$.
We obtain in Theorem~\ref{wftheorem} extensions that satisfy LF$_{d-1}$ but do
not satisfy LF$_d$; see Definition~\ref{wfetc}.\ref{Lfd12}. Thus we obtain
examples where the intersection domain $A$ is equal to its approximation
domain $B$, but $A$ is not Noetherian.

\begin{theorem} \label{wftheorem}
Let $(R,\m)$ be a countable excellent normal local domain.  Assume that  $\dim R = d+1 \ge 3$,
 that $(x_1, \ldots, x_d, y)R$   is an $\m$-primary ideal, and that
$R^*$ is  the $(y)$-adic  completion of $R$.
 Then there exists $f \in yR^*$ such that $f$ is
algebraically independent over $R$ and the map $\varphi: R[f] \longrightarrow R^*[1/y]$
is weakly flat but not flat. Indeed, $\varphi$  satisfies
LF$_{d-1}$, but fails to satisfy LF$_d$. Thus the intersection domain
$A := \mathcal Q(R[f]) \cap R^*$
is equal to its approximation domain $B$, but $A$ is not Noetherian.
\end{theorem}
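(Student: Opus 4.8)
The plan is to reduce the whole statement to a single height-controlled construction of one element $f$, and then to read off the three assertions from the machinery already in place. Write $\mathcal{Q}(R[f])\cap R^* = A$ and let $B$ be its approximation domain. Since $\dim R^*[1/y]=d$, the remark following Definition~\ref{wfetc} identifies LF$_d$ for $\varphi$ with flatness of $\varphi$, while LF$_{d-1}$ implies LF$_1$, i.e.\ weak flatness (here $d-1\ge 1$ because $\dim R=d+1\ge 3$). So once I produce $f\in y^2R^*$, transcendental over $R$, for which $\varphi$ satisfies LF$_{d-1}$ but not LF$_d$, Theorem~\ref{7.5.5} gives $A=B$ and Theorem~\ref{7.5.45} (failure of flatness) gives that $B$ is not Noetherian; hence $A=B$ is a non-Noetherian intersection domain, as claimed. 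Applying Theorem~\ref{13.2.0p} to $R\to R[f]\to R^*_{Q^*}$ for primes $Q^*$ of $R^*$ with $y\notin Q^*$ (the fiber hypotheses hold since $R$ is excellent and $R[f]$ is a polynomial ring), LF$_{d-1}$ is equivalent to the inequality $\hgt(Q^*\cap R[f])\le \hgt Q^*$ for all $Q^*\in\Spec R^*$ with $y\notin Q^*$ and $\hgt Q^*\le d-1$, while the failure of LF$_d$ amounts to exhibiting one prime $Q_0^*$ with $y\notin Q_0^*$, $\hgt Q_0^*=d$ and $\hgt(Q_0^*\cap R[f])\ge d+1$.

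Next I would fix the prime at which flatness must fail. Because $(x_1,\dots,x_d,y)R$ is $\m$-primary and $R$ is an (equidimensional, catenary) excellent normal local domain, $(x_1,\dots,x_d)R$ has height $d$ and admits a minimal prime $Q_0$ of height $d$ with $y\notin Q_0$ (otherwise $y$ would lie in the radical of $(x_1,\dots,x_d)R$, contradicting that $(x_1,\dots,x_d,y)R$ is $\m$-primary of height $d+1$). Since $R\to R^*$ is faithfully flat, I can choose a minimal prime $Q_0^*$ of $Q_0R^*$ with $y\notin Q_0^*$; then $\hgt Q_0^*=d$ and $Q_0^*\cap R=Q_0$. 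The mechanism forcing the top-height jump is simply membership: if $f\in Q_0^*$ is transcendental over $R$, then $R[f]/(f)\cong R$, so $(Q_0,f)R[f]$ is a prime of height $d+1$ contained in $Q_0^*\cap R[f]$, whence $\hgt(Q_0^*\cap R[f])\ge d+1>d=\hgt Q_0^*$ and LF$_d$ fails. It therefore remains to find $f\in Q_0^*\cap y^2R^*$, transcendental over $R$, for which LF$_{d-1}$ holds.

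For the LF$_{d-1}$ requirement I would pass through Theorem~\ref{iff} and Remark~\ref{iffremark}: taking $a=f\in y^2R^*$ gives $\tau_a=f$, and, as in the proof of Theorem~\ref{iff}, a bad jump $\hgt(Q^*\cap R[f])>\hgt Q^*$ at a prime $Q^*$ of height $r\le d-1$ occurs precisely when $t-f\in P^*$ for some $P^*\in\mathcal{U}$ (of Setting~\ref{excset}) with $\hgt P^*=r+1\le d$. Here $\mathcal{U}$ is countable, since $S=R[t]_{(\m,t)}$ is countable and Noetherian (as in the proof of Theorem~\ref{exprimli}). For each such $P^*$ the set of bad $f\in R^*$ is either empty or a single coset $f_{P^*}+(P^*\cap R^*)$ of the prime $P^*\cap R^*$, whose height equals $\hgt P^*-1\le d-1$. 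Thus LF$_{d-1}$ reduces to arranging that $f$ avoid a countable family of cosets of primes of $R^*$ of height at most $d-1$; the transcendence of $f$ over $R$ is handled simultaneously, since the elements of $R^*$ algebraic over $R$ are the roots of countably many nonzero polynomials over $R$, hence form a countable set to be avoided. Because each such prime $\p^*$ has height $<d=\hgt Q_0^*$, we have $Q_0^*\not\subseteq\p^*$, so each forbidden coset meets $Q_0^*\cap y^2R^*$ in a proper sub-coset. I would then build $f$ by an inductive Cauchy-sequence argument in the $(y)$-adically complete ring $R^*$, exactly as in the proof of Lemma~\ref{6.3.7}, but drawing every increment from $Q_0^*$ (so that the limit lies in the closed ideal $Q_0^*$) and choosing, at each stage, the increment so as to push $f$ out of the next forbidden coset.

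The main obstacle is precisely this last step: producing one element that simultaneously satisfies the membership constraint $f\in Q_0^*\cap y^2R^*$, is transcendental over $R$, and escapes countably many affine cosets. The height gap $\hgt Q_0^*=d>d-1\ge\hgt\p^*$ is what makes this feasible, since it guarantees at every stage of the induction an increment in $Q_0^*$ escaping the current coset; carrying out the simultaneous bookkeeping rigorously while staying inside the closed ideal $Q_0^*$ is the technical heart of the argument. Once $f$ is constructed, $\varphi$ satisfies LF$_{d-1}$ but not LF$_d$, and the conclusion $A=B$ non-Noetherian follows from the reductions of the first paragraph.
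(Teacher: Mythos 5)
Your reductions are sound: since $\dim R^*[1/y]=d$, failure of LF$_d$ is failure of flatness and LF$_{d-1}$ gives weak flatness, so Theorems~\ref{7.5.5} and \ref{7.5.45} yield $A=B$ non-Noetherian; the membership mechanism $f\in Q_0^*$ forcing $\hgt(Q_0^*\cap R[f])\ge d+1>\hgt Q_0^*$ is correct; and your refined reading of Theorem~\ref{iff} (LF$_{d-1}$ holds iff $t-f$ avoids the $P^*\in\mathcal U$ of height at most $d$, the bad set for each such $P^*$ being empty or a coset of the prime $P^*\cap R^*$, which has height $\hgt P^*-1\le d-1$) is also correct. Be aware that this is a genuinely different route from the paper's: the paper never performs an avoidance construction for $f$ at all. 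It invokes Theorem~\ref{exnprimli} to get $\tau_1,\dots,\tau_d$ primarily limit-intersecting, sets $f=x_1\tau_1+\cdots+x_d\tau_d$, factors $\varphi$ through the flat map $T=R[\tau_1,\dots,\tau_d]\hookrightarrow R^*[1/y]$, and reads off both conclusions from polynomial-ring facts: flatness of $R_{\p}[f]\to R_{\p}[\tau_1,\dots,\tau_d]$ when the coefficients $x_1,\dots,x_d$ generate the unit ideal of $R_{\p}$, and violation of going-down at a minimal prime of $(x_1,\dots,x_d)T$.

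The gap is in the step you call the technical heart. Lemma~\ref{6.3.7} avoids \emph{primes}, and its induction rests on two devices that are unavailable for avoiding \emph{cosets} while staying inside $Q_0^*$. First, the reduction to ``no inclusion relations'': for primes, avoiding a larger prime automatically avoids any smaller prime contained in it, so one keeps only maximal members; but avoiding a coset $f_n+\p_n^*$ says nothing about a disjoint coset $f_i+\p_i^*$ with $\p_i^*\subsetneq\p_n^*$, so you cannot discard sub-primes, and nothing in your setup prevents inclusions among the contractions $\p_j^*=P_j^*\cap R^*$. Second, the increments: to preserve the $i$-th avoidance in the limit, Lemma~\ref{6.3.7} draws all later increments from the earlier primes; but if $\p_i^*\subseteq\p_n^*$, every increment taken from $\p_i^*$ leaves the residue modulo $\p_n^*$ unchanged, so at stage $n$ you cannot escape the current coset. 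Your height-gap observation only gives $Q_0^*\not\subseteq\p_n^*$; it does not control inclusions among the $\p_j^*$ themselves, so the induction ``exactly as in Lemma~\ref{6.3.7}'' can get stuck. The repair is to abandon that mechanism in favor of a Baire-category (safety-radius) argument, in the spirit of \cite{SV}: each bad coset meets $Q_0^*\cap y^2R^*=y^2Q_0^*$ in a closed set that is not open (precisely because $y\notin\p_j^*$ and $Q_0^*\not\subseteq\p_j^*$), hence nowhere dense in the $(y)$-adically complete space $y^2Q_0^*$; adjoining the countably many elements of $y^2Q_0^*$ algebraic over $R$, a countable union of such sets cannot cover $y^2Q_0^*$. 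With that replacement your construction does produce the required $f$, and the rest of your argument goes through.
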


\begin{proof}
By Theorem~\ref{exnprimli}, there exist elements $\tau_1, \ldots, \tau_d \in yR^*$ that
are primarily limit-intersecting in $y$ over $R$. Let
$$
f ~ := ~ x_1\tau_1~ + \cdots +~ x_d\tau_d.
$$
Using that $\tau_1, \ldots, \tau_d$ are algebraically independent over $R$, we
regard $f$ as a polynomial in the polynomial ring $T := R[\tau_1, \ldots, \tau_d]$.
Let $S := R[f]$.
For $Q \in \Spec R^*[1/y]$
and $P := Q \cap T$, consider the composition $\varphi_Q$
$$
S ~ \longrightarrow ~ T_P ~  \longrightarrow  ~R^*[1/y]_{Q}.
$$
Since $\tau_1, \ldots, \tau_d$ are primarily limit-intersecting in
$y$ over $R$, the map $T \hookrightarrow R^*[1/y]$ is flat.
Thus the map $\varphi_Q$ is flat if and only if
the map $S \longrightarrow T_P$ is flat.
Let $\p := P \cap R$.

Assume that $P$ is a minimal prime of $(x_1, \ldots, x_d)T$. Then
$\p$ is a minimal prime of $(x_1, \ldots, x_d)R$. Since $T$ is a polynomial
ring over $R$, we have $P = \p T$ and $\hgt(\p) = d = \hgt P$. Notice
that $(\p, f)S = P \cap S$ and $\hgt (\p, f)S = d+1$.  Since a flat extension
satisfies the going-down property, the map $S \longrightarrow T_P$ is not flat.
Hence $\varphi$ does not satisfy LF$_d$.

Assume that $\hgt P \le d-1$.  Then $(x_1, \ldots, x_d)T$ is not contained in $P$.
Hence $(x_1, \ldots, x_d)R$ is not contained in $\p$. Consider the sequence
$$
S ~=~ R[f] ~ \hookrightarrow ~ R_{\p}[f] ~\overset{\psi}\longrightarrow ~ R_{\p}[\tau_1,
\ldots, \tau_d] ~
\hookrightarrow ~T_P,
$$
where the first and last injections are localizations. Since the nonconstant coefficients of $f$
generate the unit ideal of $R_{\p}$, the map $\psi$ is flat; see \cite[Theorem~11.20]{powerbook}.
Thus $\varphi$ satisfies LF$_{d-1}$.

 We conclude that the intersection domain $A = R^* \cap \mathcal Q(R[f])$ is equal to
its approximation domain $B$ and is not Noetherian.
\end{proof}

We describe a  specific example of Theorem~\ref{wftheorem}:

\begin{example} \label{wfexample}
Let $d$ be an integer with $d \ge 2$ and let $x_1, \ldots, x_d, y$ be
indeterminates over a countable  field $k$. Let $R$ be the localized polynomial ring
in the variables $x_1, \ldots, x_d, y$ and let $R^*$ be the $(y)$-adic
completion of $R$. Thus
$$
R~= ~ k[x_1, \ldots, x_d, y]_{(x_1, \ldots, x_d, y)} \quad \text{ and } \quad
R^* ~=~ k[x_1, \ldots, x_d]_{(x_1, \ldots, x_d)}[[y]].
$$
As in Theorem~\ref{wftheorem}, there exist elements $\tau_1, \ldots, \tau_d \in yR^*$ that
are primarily limit-intersecting in $y$ over $R$,  and we consider $ f ~ := ~ x_1\tau_1~ + \cdots +~ x_d\tau_d.$  By Theorem~\ref{wftheorem}, the map $S \longrightarrow R^*[1/y]$  satisfies LF$_{d-1}$, but does not
satisfy LF$_d$. Thus the intersection domain $A = R^* \cap \mathcal Q(R[f])$ is equal to
its approximation domain $B$ and is not Noetherian.

\end{example}


\begin{thebibliography}{noehom}



\bibitem{Abhy0} S. Abhyankar, Two notes on formal power series,
Proc. Amer. Math. Soc. 7 (1956)  903-905.


\bibitem{A}  Y. Akizuki,  Einige Bemerkungen \"uber prim\"are Integritatsbereiche
mit Teilerkettensatz, Proc. Phys-Math. Soc. Japan   17 (1935), 327-336.


\bibitem{Bu} L. Burch,  Codimension and analytic spread, Proc. Cambridge Philos. Soc. 72 (1972)
369-373.


\bibitem{powerbook}  W. Heinzer, C. Rotthaus and S. Wiegand,
{\it Power series over Noetherian rings}, book in progress.


\bibitem{LW}  G.  Leuschke and  R. Wiegand, {\it  Cohen-Macaulay representations},
Mathematical Surveys and Monographs, 181. American Mathematical Society, Providence, RI, 2012.


\bibitem{M} H. Matsumura, {\it  Commutative ring theory},
Cambridge Univ. Press, Cambridge, 1989.


\bibitem{N1} M. Nagata,  An example of a normal local ring which
is analytically reducible, Mem. Coll. Sci., Univ. Kyoto 31 (1958) 83-85.


\bibitem{N2}  M. Nagata,  {\it Local Rings}, John Wiley,
New York,  1962.




\bibitem{Sam}  P. Samuel,
{\it Unique Factorization Domains},
 Tata Inst. of Fundamental Research,   Bombay (1964).


\bibitem{Sc} F.K. Schmidt, \"Uber die Erhaltung der Kettens\"atze der Idealtheorie bei
beliebigen endlichen K\"orpererweiterungen, Math. Z.  41 (1936), 443-450.

\bibitem{SV}  R. Y. Sharp and P. Vamos,  Baire's category theorem and prime avoidance
in complete local rings, Arch. Math. 44 (1985) 243-248.



\bibitem{V}  P. Valabrega,  On two-dimensional regular
local rings and a lifting problem,  Annali della Scuola
Normale Superiore di Pisa  27 (1973)  1-21.


\bibitem{WW}  R. Wiegand and S. Wiegand, The maximal ideal space of a Noetherian ring,
J. Pure Appl. Algebra  8, (1976) 129-141.







\end{thebibliography}
\end{document}